\documentclass[10pt,english]{article}

\usepackage{amsmath,amssymb,bm}
\usepackage{ifpdf}
\ifpdf
  \usepackage{graphicx}
\else
  \usepackage[dvipdfmx]{graphicx}
\fi
\usepackage[a4paper,margin=25mm]{geometry}
\usepackage{booktabs,multirow}
\usepackage{subcaption}
\usepackage{makecell}    
\usepackage{longtable}
\usepackage{amsthm}
\usepackage{bbm}
\usepackage{mathdots}  
\usepackage{mathrsfs}
\usepackage{mathtools} 
\usepackage{float}
\usepackage{algpseudocode}
\usepackage{algorithm}
\usepackage{natbib}

\renewcommand{\algorithmicrequire}{\textbf{Input:}}
\renewcommand{\algorithmicensure}{\textbf{Output:}}

\newcommand{\argmin}{\mathop{\rm arg~min}\limits}

\newtheorem{theorem}{Theorem} 

\newtheorem{proposition}[theorem]{Proposition}
\newtheorem{lemma}[theorem]{Lemma}

\makeatletter
  \renewcommand{\eqref}[1]{Eq.~(\ref{#1})}
  \newcommand{\figref}[1]{Figure~\ref{#1}}
  \newcommand{\tabref}[1]{Table~\ref{#1}}
\makeatother



\title{Urban transit network design using spanning tree: A case study of Canberra transit network}
\author{Satoshi Suguira, Kam-Fung Cheung, Michael G. H. Bell, Hitomi Nakanishi, \\ Fumitaka Kurauchi, Supun Perera, Yogi Vidyattama}
\date{}

\begin{document}
\bibpunct{(}{)}{;}{a}{,}{,}
\maketitle

\begin{abstract}
Transit network design plays an important role in public transport. With
the simplicity of spanning tree, this paper adopts the concept of
spanning tree to help (re-)design a public transit network that
addresses passenger utility by minimizing the total
passenger-kilometers, which can be formulated as a mixed-integer
optimization model. However, searching for an optimal minimum
passenger-kilometer spanning tree is intractable for a large network.
This paper proposes a tabu search based heuristic to quickly output a
promising spanning tree. The efficacy of the proposed tabu search based
heuristic is verified using the Canberra bus network data. With the
flexibility of the proposed tabu search heuristic and the efficiency of
the transit system, this paper proposes a greedy algorithm to relax the
number of links constraint to add more links that can further minimize
the total passenger-kilometers. With the case study of Canberra transit
network, this paper provides implications to policy makers to further
improve the design of a public transit network.

\end{abstract}

\section{Introduction}
Transit network design (TND) has been an important topic for
transportation researchers and policy makers for many years
\citet{IbarraRojas2015}. The design of networks that can be operated
cost-effectively by service providers and offer a high level of service
to passengers is desired. To design a transit network to meet certain
requirements, various methodologies have been proposed, including route
setting (e.g., \citet{Mandl1980, Murray2003, Yang2007}),
frequency setting (e.g., \citet{Furth1981, Hadas2012, Verbas2015}), timetabling (e.g., \citet{dePalma2001, Gkiotsalitis2019, Gong2021, Guihaire2010, Huang2023, Li2010, Xie2021}), vehicle scheduling (e.g., \citep{Haghani2002, Kliewer2006, Kulkarni2018, Xi2020, Zhang2021}), driver scheduling (e.g., \citep{Smith1988, Toth2013, Oztop2017}), and their combinations (e.g., \citep{AndradeMichel2021, Baaj1995, Carosi2019, Szeto2011, Sun2019, vanNes1988, Zhen2020}). For an extensive survey of the literature on TND, we refer readers to \citep{Cipriani2012, Guihaire2008, Kepaptsoglou2009, IbarraRojas2015}.

While these methodologies can lead to detailed route plans, policy
makers need to develop a high-level plan, such as a master plan, prior
to producing detailed route plans. The master plan requires a more
comprehensive design of the transit network, such as how to connect the
districts/suburbs in the planning area, where to locate hubs, and where
to place the trunk routes meeting the majority of demand. In this study,
we introduce a transit network design model to policy makers for
developing an optimal network topology as the master plan. The
introduced transit network design model is based on \citet{Bell2020}'s
ferry network model and \citet{Rothlauf2009}'s optimal spanning tree problem.
To have a better scope in this study, detailed planning issues, such as
detailed routes and their frequency, and the location of stations, are
relegated to future studies.

In the transit network design problem, the given depots of vehicles are
helpful in locating hubs in the schematic plan \citep{Guihaire2010, Sun2019}. Some applications of the hub-and-spoke network
design problem to transit are presented in \citet{Alibeyg2016, Huang2018, Wang2012}. For the purpose of finding the
locations of hubs and major trunk routes in public transport, we study
the network design problem using the concept of the spanning tree which
has drawn extensive attentions in the fields of transportation and
operations research \citep{Ahuja1987, Bai2022, Bell2020, Contreras2009, Contreras2010, Hu1974, Kayisoglu2021, Lin2008, Rothlauf2009, Tilk2018, Wei2014, Zetina2019}. A spanning tree is an acyclic network
that has $n - 1$ links, where $n$ is number of nodes in the network.
With this characteristic, the optimum spanning tree is a network, in
which hubs are locations that capture the majority of passenger demands
in the underlying network and address the passenger utility in the
context of urban transit. Therefore, this paper aims to find an optimum
spanning tree locating hubs and trunk routes by minimizing the total
passenger-kilometers.

The transit network design problem (TNDP) with spanning tree constraints
was first studied by \citet{Bell2020}, who proposed a methodology to
find a spanning tree that maximizes entropy for the ferry transit
network in Sydney, Australia. While most TNDPs are formulated as
bi-level optimization problems that passengers aim to maximize their
utility (e.g., minimize their own travel time), service providers aim to
minimize their operating costs. With the concept of spanning tree, a
TNDP can be reduced to a single level optimization problem, as routes
are unique due to the characteristics of the spanning tree. Since there
are $n^{n-2}$ spanning trees in a network of $n$ nodes using
Cayley's formula \citet{Cayley1889}, searching for an optimal solution is
computationally intractable for a large network. \citet{Bell2020}
developed two heuristics to search for a promising spanning tree that
aims to solve the transit network design problem problem.
The transit network design problem (TNDP) with spanning tree constraints
The contributions of the paper are as follows. First, regarding transit
network design we extend \citet{Bell2020} model to a more general
setting by adopting the concept of spanning tree and considering
passenger utility which is measured in passenger-kilometers. The
proposed model aims to locate hubs and trunk routes in the preliminary
planning stage. Second, due to the intractability of searching for an
optimal spanning tree \citep{Bell2020, Hu1974, Kayisoglu2021, Rothlauf2009, Tilk2018, Zetina2019}, we
developed an efficient tabu search heuristic to solve the extended model
for a promising spanning tree. Third, regarding the policy maker's
concern about flexibility in designing routes and achieving smaller
passenger-kilometers, we developed a greedy algorithm by adding
additional links to the spanning tree obtained from the proposed tabu
search. To demonstrate the efficacy of the proposed tabu search
heuristic and the greedy algorithm, we conduct simulations using
Canberra bus network data and compare the performance with the
state-of-the-art heuristics in \citet{Bell2020} in the context of
transit network design. With the findings derived from the Canberra bus
network, this paper provides implications on network design to further
enhance passenger utility.

The remainder of this paper is organized as follows. In Section 2, we
describe the methodology of the transit network design problem using a
spanning tree approach and propose a tabu search heuristic to solve for
a promising spanning tree. In Section 3, we conduct simulations using
the Canberra bus network data to demonstrate the efficacy of the
proposed tabu search. Section 4 provides a greedy algorithm to further
improve the transit network design by relaxing the tree topology
constraint in the proposed model in Section 2, followed by concluding
remarks in Section 5.

\section{Methodology}
\label{sec:methodology}

\subsection{Formulation of transit network design problem using a spanning tree approach}\label{sec:tnd_sta_bilebel_formulation}

Inspired by \citet{Bell2020} entropy maximization model in the context of transit network design, this paper extends \citet{Bell2020} model by using a spanning tree approach.
We formulate the transit network design problem that minimizes the expected passenger-kilometers by searching for a minimum passenger-kilometer spanning tree, which can be formulated as an optimization model. 
We denote the model as Transit Network Design -- Spanning Tree Approach (TND-STA).
Key notations used for the optimization model are shown in \tabref{tab:notations}.

\begin{table}[H]
  \centering
  \caption{Notations for the optimization model Transit Network Design -- Spanning Tree Approach (TND-STA)}
    \begin{tabular}{lp{13cm}}
    \toprule
    Notation & Description \\
    \midrule
    $\mathbb{R}^+$ & Set of non-negative real numbers\\
    $N$ & Set of stations in the network \\
    $W$ & Set of pairs of stations \\
    $\mathcal{P}$ & Set of paths in the network\\
    $\mathcal{P}_w$ & Set of paths that connect station pair $w\in W$\\
    $\mathcal{P}_{w\left(p\right)}$ & Set of paths that connect station pair $w\in W$ at either end of path $p\in\mathcal{P}$\\
    $E$ & Set of links, represented by their start and end nodes, i.e., stations in the transit network, $\left(ij\right)\in E;i,j\in N$ \\
    $\boldsymbol{h}$ & Vector of passenger path flows (in passengers per day), with element $h_p\in\mathbb{R}^+,\ p\in\mathcal{P}$ \\
    $\boldsymbol{B}$ & Origin-destination-path (OD-path) incidence matrix, with element $b_{wp}=1,\ w\in W,\ p\in\mathcal{P}$ if path $p$ connects OD pair $w$; and 0 otherwise \\
     & (Note: $w$ can represent $\left(ij\right)$ only if the direct link $\left(ij\right)$ connects the pair of station $w$, i.e., (station $i$, station $j$), and vice versa.)\\
    $\boldsymbol{d}$ & OD matrix in vector form, with element $d_w\in\mathbb{R}^+,w\in W$ \\
    $\tau$ & Travel distance budget \\
    $M$ & A very large number which can set to infinity \\
    $\boldsymbol{c}$ & Vector of distance (in kilometers) of all pairs of stations in a spanning tree, with element $c_w\in\mathbb{R}^+,w\in W$ \\
    $\boldsymbol{t}$ & Vector of link travel distances (in kilometers), with element $t_{\left(ij\right)}\in\mathbb{R}^+$, where (ij) is the link connecting station $i\in N$ to station $j\in N$ \\
    $\boldsymbol{Q}$ & Link-path incidence matrix, with element $q_{\left(ij\right)p}=1,\ \left(ij\right)\in E,\ p\in\mathcal{P}$ if link ($ij$) lies on path $p$; and 0 otherwise \\
    $\boldsymbol{x}$ & Decision variable -- Vector of link passenger flows (in passengers per day), with element $x_{\left(ij\right)}\in\mathbb{R}^+$, where (ij) is the link connecting station $i\in N$ and station $j\in N$ \\
    $\boldsymbol{y}$ & Decision variable -- Vector of $\left\{0,\ 1\right\}^{\left|N\right|^2}$, with element $y_{\left(ij\right)}=1$ if a direct service exists between stations $i,\ j\in N:i\neq j$; and 0 otherwise \\
    \bottomrule
    \end{tabular}%
  \label{tab:notations}%

\end{table}

The model of the transit network design using the spanning tree approach (TND-STA) is formulated as follows:

\begin{equation}{\label{eq:tnd_sta_obj}}
  \min_{\boldsymbol{x},\ \boldsymbol{y}}\sum_{w\in W}{d_wc_w} \\
\end{equation}
\begin{align}
\text{s.t.}\quad & d_w=\sum_{p\in\mathcal{P}_w}{b_{wp}h_p};\forall w\in W \label{eq:tnd_sta_flow_conservation}\\
& c_w=\sum_{\left(ij\right)\in E}\sum_{p\in\mathcal{P}_w}{b_{wp}q_{\left(ij\right)p}t_{\left(ij\right)}\ };\forall w\in W \label{eq:tnd_sta_distance}\\ 
& x_{\left(ij\right)}=\sum_{w\in W}\sum_{p\in\mathcal{P}_w}{b_{wp}q_{\left(ij\right)p}h_p};\forall i,\ j\in N \label{eq:tnd_sta_link_flow}\\
& \tau\geq\sum_{w\in W}\sum_{p\in\mathcal{P}_w}{c_wb_{wp}h_p}=\sum_{\left(ij\right)\in E}{t_{\left(ij\right)}x_{\left(ij\right)}} \label{eq:tnd_sta_travel_time_budget}\\
& \sum_{i,j\in N} y_{\left(ij\right)}\leq |N|-1 \label{eq:tnd_sta_num_links}\\
& \sum_{i,j\in V} y_{\left(ij\right)}\leq \left|V\right|-1;\forall V\subset N,\ V\neq N,\ V\neq\emptyset \label{eq:tnd_sta_no_cycles} \\
& h_p>0;\forall p\in\mathcal{P} \label{eq:tnd_sta_positive_h}\\
& x_{\left(ij\right)}\le My_{\left(ij\right)};\forall i,\ j\in N \label{eq:tnd_sta_link_capacity}\\
& y_{\left(ij\right)}\in\left\{0,1\right\};\forall i,j\in N \label{eq:tnd_sta_binary_y} 
\end{align}

\eqref{eq:tnd_sta_obj} is the objective of the TND-STA that minimizes the expected passenger-kilometers in the transit network. \eqref{eq:tnd_sta_flow_conservation} is the flow conservation constraint given an OD matrix of trips. Formulae \eqref{eq:tnd_sta_distance} and \eqref{eq:tnd_sta_link_flow} present link-path relationship on the sense of travel distance and passenger flow, respectively. Formula \eqref{eq:tnd_sta_travel_time_budget} represents the travel distance budget constraint. Formula \eqref{eq:tnd_sta_num_links} represents the constraint on the number of links to make the network a spanning tree. Formula (7) is to ensure that there are no cycles in the network and eliminate subtours. Formula \eqref{eq:tnd_sta_positive_h} ensures that there are passengers in each path, while Formula \eqref{eq:tnd_sta_link_capacity} ensures that only operational links are used with M being a very large number. Formula \eqref{eq:tnd_sta_binary_y} restricts variables $y_{\left(ij\right)},i,j\in N$ to be binary. Besides, restricting variables $x_{\left(ij\right)},i,j\in N$ to be non-negative is redundant in the model due to the positivity of the auxiliary variables $h_p,p\in\mathcal{P}$, the non-negativity link-path incidence matrix $\boldsymbol{Q}$, and their relation in \eqref{eq:tnd_sta_link_flow}.

The objective function in Model P3 in \citet{Bell2020} is defined to maximize passenger-utility which is shown below:

\begin{equation}\label{eq:bell_entropy_maximization}
\max{\sum_{w\in W}{d_w\ln{\sum_{p\in\mathcal{P}_w}\exp{\left(-\lambda\sum_{\left(ij\right)\in E}{q_{\left(ij\right)p}t_{ij}}\right)}}}}
\end{equation}

Formula \eqref{eq:bell_entropy_maximization} is equivalent to Formula \eqref{eq:tnd_sta_obj} when the passenger heterogeneity parameter $\lambda$ is set to infinity (\citet{Bell2020}). Furthermore, the proposed tabu search algorithm in \ref{sec:ts_algorithm} can be applied to the entropy maximization problem with a small modification. Although the problem in this paper does not consider the travel time budget constraint on each OD pair, the travel distance budget constraint captures the travel time budget implicitly. If necessary, we can add the travel time budget constraint to the model easily and apply the same tabu search algorithm to solve the extended model, which shows the practicality of the proposed algorithm.

Although searching for a tree with minimum distances between all node pairs is NP-hard \citep{Garey1979} and the complexity of searching a minimum passenger-kilometer spanning tree is unknown, \citet{Bell2020} developed two greedy heuristic algorithms: Link Swapping algorithm and Link Deletion algorithm to obtain maximal passenger-utility spanning trees for the Sydney ferry network. Both Link Swapping and Link Deletion algorithms are computationally demanding for a large network, as they need to calculate the inverse of a matrix. Therefore, with its success on solving various problems in the field of transportation \citep{Chen2022, Hu2018, Xiao2018, Xu2022, Xue2021}, we propose a tabu search based heuristic to obtain a promising solution to TND-STA quickly.

\subsection{Solution algorithm: Link Swapping with Tabu Search}\label{sec:ts_algorithm}

Inspired by \citet{Oncan2008} and \citet{Rothlauf2009} algorithm design strategies on searching for an optimal spanning tree, we develop a tabu search based heuristic, called Link Swapping with Tabu Search, to search for a minimum passenger-kilometer spanning tree. Key notations used for the tabu search heuristic are shown in \tabref{tab:ts_notations}.

\begin{table}[H]
  \centering
  \caption{Notations for the tabu search heuristic Link Swapping with Tabu Search}
    \begin{tabular}{lp{13cm}}
    \toprule
    Notation & Description \\
    \midrule
    $\boldsymbol{c}\left(T\right)$ & Vector of distances along spanning tree $T$ between OD pair $w$, with element $c_w\in\mathbb{R}^+,\ w\in W$ \\
    $Z\left(T\right)$ & Value of the objective function of spanning tree $T$, defined as $Z(T)=\boldsymbol{c}\left(T\right)^T\ \boldsymbol{d}$, where $\boldsymbol{d}$ is the OD matrix in vector form \\
    $m\left(T\right)$ & Number of nodes in tree $T$ \\
    $\mathcal{A}$ & Set of randomly selected links \\
    $\mathcal{L}$ & Tabu list \\
    $T^\ast$ & Optimal spanning tree \\
    $Z^\ast=Z\left(T^\ast\right)$ & Optimal value of the objective function of the optimal spanning tree $T^\ast$ \\
    $_nT$ & Spanning tree in the n-th iteration \\
    ${_n^1}T_{-a} ,{_n^2}T_{-a} $ & Subtrees/Subnetworks 1 and 2 after removing link $a$ in ${_n}T$ \\
    ${_n}T_{-a}^{+b}$ & Spanning tree created by removing link $a$ and inserting link $b$ in ${_n}T$ \\
    \bottomrule
    \end{tabular}%
  \label{tab:ts_notations}%
\end{table}

The procedure of the general tabu search \citet{Glover1989} is as follows:
\begin{enumerate}
  \item Initialize tabu list $\mathcal{L}\leftarrow \emptyset$, current state S, maximum number of iterations $\phi$, current iteration $n\leftarrow0$, best state $S^\ast\leftarrow S$, best objective value $Z^\ast\leftarrow f\left(S\right)$, where $f\left(S\right)$ is an evaluation function;
	\item Generate a set of states of size $r$ which is denoted as $\boldsymbol{S}^{\prime}=\left\{S_1,\ S_2,\ldots,S_r\right\}$, in which each state $S_i,\ i=1,2,\ldots,r$ is `close' to state S;
	\item Find $\hat{S}=\arg{\min_{S_i}{f\left(\boldsymbol{S}^\prime\middle| S_i\in\boldsymbol{S}^\prime,\ S_i\notin\mathcal{L}\right)}}$ and $\hat{Z}=f\left(\hat{S}\right)$;
	\item $S^\ast \leftarrow\hat{S}, Z^\ast \leftarrow\hat{Z}$ if $\hat{Z}<Z^\ast$;
	\item Update tabu list $\mathcal{L}$, $S\leftarrow\hat{S}$;
	\item Stop when $n>\phi$, otherwise $n \leftarrow n+1$ and go back to Step 2.
\end{enumerate}

Tabu search can select the obtained state in each iteration even though the candidate state is worse than the current best state. This strategy can help tabu search escape from a local optima so that there is a chance to search for a global optima. With this strategy, we introduce our proposed solution algorithm as follows. The topology of spanning tree $T$ can be considered as state $S$, and $Z(T)=f\left(T\right)$ means the value of objective function, $Z\left(T\right)$ is defined as the total passenger-kilometer in $T$. Thus, Step 1 in the general tabu search procedure can be updated by setting $\mathcal{L}=\emptyset, {_n}T={_0}T, T^\ast={_0}T$, and $Z^\ast=Z\left({_0}T\right)$, where $\emptyset$ is the empty set. ${_0}T$ can be a random spanning tree or a minimum spanning tree \citep{Oncan2008, Rothlauf2009}, which can be generated by an efficient spanning tree algorithm, e.g., Kruskal algorithm \citep{Kruskal1956}. Select r spanning trees $\boldsymbol{T}^\prime=\left\{T_1,T_2,T_3,\ldots,T_r\right\}$ generated by swapping a pair of links in T (as Step 2), where the swapping procedure involves removing an existing link then adding a new link. In Step 3, tabu search calculates the expected passenger-kilometers for each spanning tree in $\boldsymbol{T}^\prime$, and sets $\hat{T}=\arg{\min_{T_i}{f\left(\boldsymbol{T}^\prime\middle| T_i\in\boldsymbol{T}^\prime,\ T_i\notin\mathcal{L}\right)}}$. With these modifications, we can apply the tabu search to solve TND-STA for a promising solution.

For efficient computation of the TND-STA using the tabu search, we propose an algorithm by studying the properties of a tree. In Step 2 of the tabu search, a candidate spanning tree is generated by implementing the swapping procedure in ${_n}T$. To verify a generated graph is a tree, we need to check whether the graph contains a closed path, where a closed path is a directed path that starts at one node and ends at the same node.

In this paper, the swapping procedure is first to remove a link from a spanning tree to produce two subnetworks. Then, we add a new link to connect one node in a subnetwork and another node in the other subnetwork to produce a new network. The new network constructed through this swapping procedure is always a spanning tree which is shown in Proposition 1. To prove Proposition 1, we need the following lemmas.

\begin{lemma}
  A spanning tree $T\left(V,E\right)$ is always divided into two disjoint subnetworks when a link is deleted from it, where $V$ and $E$ are set of nodes and links in the spanning tree, respectively.
  \label{lem:tree_divided_two_subnetworks}
\end{lemma}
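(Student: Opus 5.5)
We argue directly from the definition of a spanning tree as a connected acyclic graph on $V$ with $|V|-1$ links. Fix a link $a=(uv)\in E$ and let $T_{-a}=(V,E\setminus\{a\})$. We must show two things: $T_{-a}$ is disconnected, and it has exactly two connected components, one containing $u$ and one containing $v$. These two components are the subnetworks ${_n^1}T_{-a}$ and ${_n^2}T_{-a}$. They are automatically disjoint because connected components partition the node set.

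\emph{Step 1 (disconnection).} Suppose, for contradiction, that $u$ and $v$ are still joined by a path $P$ in $T_{-a}$. Then $P$ together with the link $a$ forms a cycle in $T$, contradicting acyclicity, which is the subtour constraint \eqref{eq:tnd_sta_no_cycles} applied to the node set of $P$. So no $u$--$v$ path avoids $a$. Hence the component $V_u$ of $T_{-a}$ containing $u$ does not contain $v$. Let $V_v$ be the component containing $v$.

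\emph{Step 2 (at most two components).} Take an arbitrary node $z\in V$. Since $T$ is connected, there is a path from $z$ to $u$ in $T$; fix a shortest one, $P_z$. If $P_z$ does not use $a$, it lies in $T_{-a}$, so $z\in V_u$. If $P_z$ does use $a$, then $a$ must be its last link, entered at $v$, since a shortest path cannot revisit $u$. Its initial segment from $z$ to $v$ therefore avoids $a$, so $z\in V_v$. Thus $V=V_u\cup V_v$, and with Step 1 these are exactly two disjoint nonempty subnetworks, each connected and, as subgraphs of $T$, acyclic. An alternative counting check is available: a forest on $|V|$ nodes with $|V|-2$ links has exactly $|V|-(|V|-2)=2$ components. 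I would use the path argument as the main proof, since it avoids invoking the forest identity.

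The main obstacle is Step 2, specifically the case where the path uses $a$. I handle it by choosing $P_z$ to be a shortest path, which forces $a$ to appear only as the final link. Everything else follows immediately from the definitions.
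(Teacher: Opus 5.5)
Your proof is correct, and it is more complete than the paper's. The paper's proof cites the textbook fact that deleting a link from a tree disconnects it, then concludes ``hence'' that two disjoint subnetworks result. It gives no argument that there are exactly two pieces rather than possibly more.

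Your Step~1 reproves the cited fact directly from acyclicity. Your Step~2 supplies exactly the missing part: any path from a node $z$ to $u$ that uses $a$ must end with $a$, so $z$ lies in the component of $u$ or of $v$. Your alternative forest count, $|V|-(|V|-2)=2$ components, closes the same gap in one line if one is willing to quote that identity.

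The paper's route gains brevity by relying on a reference. Yours is self-contained and, as a by-product, shows that each piece is connected (being a component) and acyclic (being a subgraph of $T$), i.e.\ a tree. That is exactly what the paper needs later in Lemmas~\ref{lem:subnetworks_disjoint} and~\ref{lem:connected_graph_is_tree}. There it instead appeals to Lemma~\ref{lem:subgraph_of_tree_is_tree}, whose statement is only true for connected subgraphs.

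One small inaccuracy: your parenthetical identification of acyclicity with constraint \eqref{eq:tnd_sta_no_cycles} does not cover a cycle through every node, because that constraint excludes $V=N$. In that case \eqref{eq:tnd_sta_num_links} would be the relevant constraint. You do not need either, since acyclicity is part of the definition of a tree.
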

\begin{proof}
  A graph with one link deleted from the global tree $T\left(V,E\right)$ becomes disconnected \citep{Diestel2010}. Hence, the deletion of one link in a tree always generates two disjoint subnetworks.
\end{proof}

\begin{lemma}
  Any subgraph of a spanning tree is a tree.
  \label{lem:subgraph_of_tree_is_tree}
\end{lemma}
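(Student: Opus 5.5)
The statement is only true if \emph{subgraph} means a \emph{connected} subgraph. Deleting two links from a path already gives a disconnected acyclic subgraph, which is a forest but not a tree. I will therefore prove it in the form needed later: every connected subgraph of a spanning tree $T(V,E)$ is a tree. This covers in particular each of the two subnetworks ${_n^1}T_{-a}$ and ${_n^2}T_{-a}$ produced by Lemma~\ref{lem:tree_divided_two_subnetworks}, which are connected by construction as the components after deleting $a$. I will use the standard definition that a tree is a connected graph without a closed path (cycle).

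\textbf{Step 1 (acyclicity is inherited).} Let $H(V',E')$ be a subgraph of $T$, so $V'\subseteq V$ and $E'\subseteq E$. Any closed path in $H$ uses only nodes in $V'$ and links in $E'$, so it is also a closed path in $T$. Since $T$ is a tree it has no closed path, and hence neither does $H$. So every subgraph of $T$ is a forest.

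\textbf{Step 2 (connectedness).} If $H$ is assumed connected, Step 1 immediately makes $H$ a tree. For the two subnetworks of Lemma~\ref{lem:tree_divided_two_subnetworks}, I will argue connectedness directly. Let $a=(uv)$ and let ${_n^1}T_{-a}$ be the set of nodes reachable from $u$ in $T$ without using $a$, with ${_n^2}T_{-a}$ defined similarly from $v$. Each is connected by definition. Together they cover $V$: for any node $z$, the unique $u$--$z$ path in $T$ either avoids $a$, placing $z$ in the first set, or passes through $a$, so its tail from $v$ places $z$ in the second set.

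The only real obstacle is the wording of the statement rather than the mathematics. I would state the connectivity hypothesis explicitly, and optionally note the link count $|E'|=|V'|-1$ as a consistency check via the characterisation of trees. The application in Proposition~1 only needs the components after deleting one link to be trees.
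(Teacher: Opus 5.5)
Your argument is correct for the version you state, and you are right that the lemma is false as written. Indeed $T-a$ in Lemma~\ref{lem:tree_divided_two_subnetworks} is itself a disconnected subgraph of $T$.

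Your Step~1 is the same as the first half of the paper's proof, but you handle the second tree condition differently. The paper never mentions connectedness. It tries to get $|\dot E|=|\dot V|-1$ by counting: it re-adds the $|V|-1-k$ missing links until $T$ is recovered, asserts that exactly $|V|-q$ nodes are added, and concludes $k=q-1$. That tacitly requires each re-added link to bring in exactly one new node. This can be arranged only when $\dot G$ is connected, because then acyclicity of $T$ forbids a missing link between two nodes of the connected current graph. If $\dot G$ has $c\ge 2$ components, then $k=q-c$ by your Step~1. More links than nodes are then re-added, so some link joins two nodes already present and the count breaks. The paper's counting step is therefore exactly where your connectedness hypothesis is hidden.

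Your route is acyclic plus connected, with connectedness of ${_n^1}T_{-a}$ and ${_n^2}T_{-a}$ checked directly from their definition as reachability classes. It yields a true statement that still covers every use in Lemmas~\ref{lem:subnetworks_disjoint} and~\ref{lem:connected_graph_is_tree}. The counting route, once repaired by assuming connectedness, has one advantage: it delivers the link count $|\dot E|=|\dot V|-1$ directly, which is the form Lemma~\ref{lem:connected_graph_is_tree} quotes. In your version that count comes from the standard equivalence of tree characterizations. For Lemma~\ref{lem:connected_graph_is_tree} specifically, it follows even more simply from $|E\setminus\{a\}|=|V|-2$.
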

\begin{proof}
  Consider a spanning tree $T\left(V,E\right)$ and an arbitrary subgraph $\dot{G}\left(\dot{V},\dot{E}\right)$ with $\dot{E}=\left\{e_1,e_2,\ldots,e_k\right\},\bar{E}=E-\dot{E}=\left\{e_{k+1,}e_{k+2},\ldots,e_{\left|V\right|-1}\right\},\dot{V}=\left\{v_1,v_2,\ldots,v_q\right\},\bar{V}=V-\dot{V}={v_{q+1},v_{q+2},\ldots,v_{\left|V\right|}}$. From the definition of a tree, it is sufficient to show that $\dot{G}\left(\dot{V},\dot{E}\right)$ has no closed path and the number of links in $\dot{E}$ is $\left|\dot{V}\right|-1$. First, if $\dot{G}\left(\dot{V},\dot{E}\right)$ has a closed path, then the graph with all the additional links in $\bar{E}$, i.e., $T\left(V,E\right)$, always has a closed path that leads to a contradiction. Thus, the subgraph $\dot{G}\left(\dot{V},\dot{E}\right)$ of the spanning tree always has no closed path. Then, consider the operation of adding all links to $\dot{G}\left(\dot{V},\dot{E}\right)$ by repeating the addition of the links contained in $\bar{E}$. The graph generated by this operation is $T\left(V,E\right)$. Total number of this operation is $\left|\bar{E}\right|=\left|V\right|-1-k$, and the number of nodes added to the graph must be $\left|\bar{V}\right|=\left|V\right|-q$. Both of these conditions are satisfied only when $k=q-1$. Therefore, $\left|\dot{E}\right|=\left|\dot{V}\right|-1$ is valid.
\end{proof}

\begin{lemma}
  ${^1}T_{-a}$  and ${^2}T_{-a}$  are disjoint subnetworks when link $a$ is removed from spanning tree $T\left(V,E\right)$.
  \label{lem:subnetworks_disjoint}
\end{lemma}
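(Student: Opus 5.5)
Our plan is to prove that ${^1}T_{-a}$ and ${^2}T_{-a}$ share no node and no link, and that each of them is itself a tree. Write $a=(uv)$ and let $T-a$ denote the graph $(V,E\setminus\{a\})$. By Lemma~\ref{lem:tree_divided_two_subnetworks}, $T-a$ is disconnected. We first show it has exactly two connected components. Every node $z\in V$ is joined to $u$ by a unique path in $T$. If that path does not use $a$, it survives in $T-a$, so $z$ lies in the component of $u$. If it does use $a$, then $a$ must be its last link, and the portion of the path ending at $v$ survives, so $z$ lies in the component of $v$. Hence $T-a$ has at most two components, and by Lemma~\ref{lem:tree_divided_two_subnetworks} it has exactly two. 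We define ${^1}T_{-a}$ as the component containing $u$ and ${^2}T_{-a}$ as the component containing $v$.

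Next we show disjointness. The node sets of distinct connected components are disjoint by definition, since two components sharing a node would be the same component. This also gives $u\notin {^2}T_{-a}$: otherwise $T-a$ would contain a $u$--$v$ path, and adding $a$ to it would create a closed path in $T$, which is impossible. The link sets are disjoint because any link of $T-a$ has both endpoints in a single component. The link $a$ belongs to neither subnetwork.

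Finally we show each subnetwork is a tree. Each of ${^1}T_{-a}$ and ${^2}T_{-a}$ is a subgraph of $T$, so Lemma~\ref{lem:subgraph_of_tree_is_tree} applies. It gives that each has no closed path and that $|E_i|=|V_i|-1$ for $i=1,2$. As a consistency check, summing gives $|V|-2=|E|-1$, which matches the number of links remaining after deleting $a$.

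The main obstacle is the first step. Lemma~\ref{lem:tree_divided_two_subnetworks} only asserts disconnection, while the conclusion that there are \emph{exactly} two pieces relies on the argument through the endpoints $u$ and $v$. That argument in turn uses the uniqueness of paths in a tree, which we take as standard \citep{Diestel2010}.
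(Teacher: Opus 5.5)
Your proof is correct and follows the same route as the paper. The paper's proof just cites Lemma~\ref{lem:tree_divided_two_subnetworks} for the split into two disjoint pieces and Lemma~\ref{lem:subgraph_of_tree_is_tree} for each piece being a tree. Your endpoint argument usefully supplies the ``exactly two components'' step, which the proof of Lemma~\ref{lem:tree_divided_two_subnetworks} skips by going straight from disconnection to two pieces; that argument needs only connectivity of $T$, not uniqueness of paths.

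One caution: Lemma~\ref{lem:subgraph_of_tree_is_tree} is true only for connected subgraphs, since an arbitrary subgraph of a tree is merely a forest. So say explicitly that you apply it to connected components, which are connected by definition and acyclic as subgraphs of $T$, hence trees.
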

\begin{proof}
  A direct result from Lemma~\ref{lem:tree_divided_two_subnetworks} and Lemma~\ref{lem:subgraph_of_tree_is_tree}.
\end{proof}

\begin{lemma}
  The graph obtained by connecting one node in the spanning tree ${^1}T_{-a}$  and another node in the spanning tree ${^2}T_{-a}$  is a spanning tree, where ${^1}T_{-a}$  and ${^2}T_{-a}$  are trees by removing link $a$ in the spanning tree $T\left(V,E\right)$.
  \label{lem:connected_graph_is_tree}
\end{lemma}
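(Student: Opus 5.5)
We use the characterisation of a tree already invoked in the proof of Lemma~\ref{lem:subgraph_of_tree_is_tree}: a graph on $|V|$ nodes is a spanning tree if it contains no closed path and has exactly $|V|-1$ links. Let $V_1,E_1$ and $V_2,E_2$ be the node and link sets of ${^1}T_{-a}$ and ${^2}T_{-a}$. Let $b=(uv)$ be the new link with $u\in V_1$ and $v\in V_2$. Set $G={^1}T_{-a}\cup{^2}T_{-a}\cup\{b\}$.

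\textbf{Step 1 (node and link count).} By Lemma~\ref{lem:tree_divided_two_subnetworks} and Lemma~\ref{lem:subnetworks_disjoint}, $V_1$ and $V_2$ are disjoint and $V_1\cup V_2=V$. Every link of $T$ other than $a$ lies in exactly one of $E_1,E_2$, so $|E_1|+|E_2|=|V|-2$. This count can also be obtained from Lemma~\ref{lem:subgraph_of_tree_is_tree}, which gives $|E_i|=|V_i|-1$; summing yields $|V_1|+|V_2|-2=|V|-2$. Adding $b$, we get $|E(G)|=|V|-1$, and $G$ spans all of $V$.

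\textbf{Step 2 (no closed path).} Suppose $G$ contains a closed path $C$. Since ${^1}T_{-a}$ and ${^2}T_{-a}$ are trees (Lemma~\ref{lem:subgraph_of_tree_is_tree}), neither contains a closed path, so $C$ must use $b$. Removing $b$ from $C$ leaves a path from $v$ back to $u$ in $G-b={^1}T_{-a}\cup{^2}T_{-a}$. That graph has no link joining $V_1$ to $V_2$, so every path starting at $v\in V_2$ stays in $V_2$ and cannot reach $u\in V_1$, a contradiction. Equivalently, $b$ is the unique link crossing the cut $(V_1,V_2)$, and any closed path must cross every cut an even number of times. Hence $G$ is acyclic, and together with Step 1 it is a spanning tree.

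\textbf{Main obstacle.} The delicate point is making ``no link joins $V_1$ to $V_2$'' rigorous from the stated lemmas, since Lemma~\ref{lem:subnetworks_disjoint} asserts disjointness of nodes but not the absence of crossing links. I would argue this directly: any link of $T$ with one end in each part, other than $a$, would survive the deletion of $a$. It would then connect the two subnetworks, contradicting Lemma~\ref{lem:tree_divided_two_subnetworks}, which says that after deleting $a$ the two components are disconnected. As an alternative route, connectivity of $G$ is immediate, since each ${^i}T_{-a}$ is connected and $b$ joins them. A connected graph with $|V|-1$ links is a tree, which avoids the cycle argument if the reader accepts that standard fact \citep{Diestel2010}.
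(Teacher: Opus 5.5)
Your proof is correct and takes the same route as the paper: you show the new graph has no closed path and has $|V|-1$ links, namely the $|V|-2$ links of the two subtrees plus $b$. The one difference is that you justify the acyclicity step, using the fact that $b$ is the only link joining $V_1$ and $V_2$, whereas the paper asserts it directly from each subtree being acyclic.
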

\begin{proof}
  It is sufficient to show that the graph obtained by connecting one node in ${^1}T_{-a}$  and another node in ${^2}T_{-a}$ has no closed paths and the number of links is $|V|-1$. By Lemma~\ref{lem:subnetworks_disjoint}, ${^1}T_{-a}$ and ${^2}T_{-a}$ are trees by removing link $a$ in the spanning tree $T\left(V,E\right)$. Therefore, the total number of links in these two subnetworks is $\left|V\right|-2$, and the number of links in the graph with one additional link is $\left|V\right|-1$. By Lemma~\ref{lem:subgraph_of_tree_is_tree}, each of the two trees does not contain a closed path. Thus, the graph by connecting one node in ${^1}T_{-a}$ and another node in ${^2}T_{-a}$  (i.e., adding a new link between the two disjoint trees) has no closed path.
\end{proof}

\begin{proposition}
  A spanning tree is always partitioned into two subnetworks when a link is removed from it, and the graph obtained by adding a new link that connects a pair of nodes with one in each subnetwork is a spanning tree.
  \label{prop:swapping_procedure}
\end{proposition}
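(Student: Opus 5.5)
The plan is to assemble Proposition~\ref{prop:swapping_procedure} directly from the lemmas. The statement has two parts, the partition claim and the spanning-tree claim, and I would treat them in that order.

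\textbf{Partition.} Let $T(V,E)$ be a spanning tree and $a=(uv)\in E$. By Lemma~\ref{lem:tree_divided_two_subnetworks}, deleting $a$ disconnects $T$. To be explicit that there are exactly two pieces, I would define ${^1}T_{-a}$ as the component containing $u$ and ${^2}T_{-a}$ as the component containing $v$. Every node $z\in V$ lies in one of them: take the unique $u$--$z$ path in $T$. If that path avoids $a$, then $z$ is in the component of $u$. Otherwise the path passes through $v$ before reaching $z$, and its tail from $v$ to $z$ avoids $a$, so $z$ is in the component of $v$. Lemma~\ref{lem:subnetworks_disjoint} then says these two pieces are disjoint, and Lemma~\ref{lem:subgraph_of_tree_is_tree} says each is a tree. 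This gives the partition $V=V_1\cup V_2$ with $V_1\cap V_2=\emptyset$.

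\textbf{Spanning tree after insertion.} Let $b$ join $x\in V_1$ to $y\in V_2$, and set $T'={_n}T_{-a}^{+b}$. I would verify three things.
\begin{enumerate}
\item $T'$ has $|V|-1$ links. The two subtrees have $(|V_1|-1)+(|V_2|-1)=|V|-2$ links, and $b$ adds one more.
\item $T'$ is connected. Any two nodes in the same $V_i$ are joined inside ${^i}T_{-a}$. Nodes $z_1\in V_1$ and $z_2\in V_2$ are joined by the walk $z_1\to x$ in ${^1}T_{-a}$, then $b$, then $y\to z_2$ in ${^2}T_{-a}$.
\item $T'$ has no cycle. This is the content of Lemma~\ref{lem:connected_graph_is_tree}, and I would make it explicit. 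A cycle cannot lie entirely inside ${^1}T_{-a}$ or ${^2}T_{-a}$, since these are trees. So any cycle must cross between $V_1$ and $V_2$. Because $V_1$ and $V_2$ are disjoint, every closed walk crosses the cut an even number of times. But $b$ is the only link across the cut, so a cycle would have to use $b$ twice, which is impossible.
\end{enumerate}

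Since $T'$ is connected, acyclic and spans all of $V$, it is a spanning tree. Either of conditions 2 or 3, combined with the link count, already suffices by the standard characterisation of trees, so the argument has some redundancy as a safeguard.

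I expect the main obstacle to be the rigour of the acyclicity step. Lemma~\ref{lem:connected_graph_is_tree} only asserts it, so I would anchor it in the cut-parity argument above. A second weak point is that the proof of Lemma~\ref{lem:subgraph_of_tree_is_tree} implicitly assumes the subgraph is connected. To avoid depending on that, I would cite the lemma only for the two components ${^1}T_{-a}$ and ${^2}T_{-a}$, which are connected by construction.
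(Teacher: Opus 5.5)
Your proposal is correct and follows the paper's route: the paper obtains the proposition directly from Lemma~\ref{lem:tree_divided_two_subnetworks} (the partition) and Lemma~\ref{lem:connected_graph_is_tree}, which argues via the link count $|V|-1$ and the absence of closed paths, and that is your decomposition. Your explicit argument that there are exactly two components, your cut-parity proof of acyclicity, and your restriction of Lemma~\ref{lem:subgraph_of_tree_is_tree} to the connected pieces make rigorous steps the paper only asserts. That caution is warranted, since the lemma as literally stated fails for disconnected subgraphs.
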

\begin{proof}
  A direct result from Lemma~\ref{lem:tree_divided_two_subnetworks} and Lemma~\ref{lem:connected_graph_is_tree}.
\end{proof}

From Proposition~\ref{prop:swapping_procedure}, we first remove a link in the tree, then reconnect these two subnetworks by linking one node in a subnetwork and one node in the other subnetwork. The number of candidate links that reconnect two subnetworks after removing link $a$ is:

\begin{equation}
K_a=m\left({^1}T_{-a}\right)\times m\left({^2}T_{-a}\right),\label{eq:number_of_links_to_reconnect_subnetworks}
\end{equation}

where $m\left(T\right)$ is the number of nodes in tree $T$. The maximum value of $K_a$ is $\left\lfloor\frac{m\left(T\right)}{2}\right\rfloor\left\lceil\frac{m\left(T\right)}{2}\right\rceil$ by Lemma~\ref{lem:max_number_of_links}, where $\left\lfloor x\right\rfloor$ is the largest integer no greater than $x$ and $\left\lceil x\right\rceil$ is the least integer no smaller than $x$.

\begin{lemma}
  Given a positive integer $n$, $C_1^{n-k}C_1^k$ achieves its maximum provided that
  $k=\left\lfloor\frac{n}{2}\right\rfloor\ \text{or}\ k=\left\lceil\frac{n}{2}\right\rceil$.
  \label{lem:max_number_of_links}
\end{lemma}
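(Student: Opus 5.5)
Since $C_1^{m}=m$ for every nonnegative integer $m$, the quantity in Lemma~\ref{lem:max_number_of_links} is the product $f(k)=(n-k)k$. So the lemma says that this product of two nonnegative integers with fixed sum $n$ is largest when the two parts are as equal as possible. In our application $k$ ranges over $1,\dots,n-1$ (each subtree contains at least one node). I will nonetheless treat $k\in\{0,1,\dots,n\}$, since the endpoints give $f=0$ and cannot be maximizers when $n\ge 2$.

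The plan is to complete the square:
\begin{equation*}
f(k)=nk-k^2=\frac{n^2}{4}-\left(k-\frac{n}{2}\right)^2 .
\end{equation*}
Maximizing $f$ over integers $k$ is therefore the same as minimizing $\left|k-\frac{n}{2}\right|$ over integers $k$.

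The integer step is the only point needing care, and I would handle it by cases on the parity of $n$.
\begin{itemize}
\item If $n$ is even, then $\left\lfloor\frac{n}{2}\right\rfloor=\left\lceil\frac{n}{2}\right\rceil=\frac{n}{2}$. This choice gives distance $0$, which is the unique minimum.
\item If $n$ is odd, then $\frac{n}{2}$ is a half-integer. The closest integers are exactly $\left\lfloor\frac{n}{2}\right\rfloor$ and $\left\lceil\frac{n}{2}\right\rceil$, each at distance $\frac12$. Every other integer is at distance at least $\frac32$.
\end{itemize}

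In both cases the maximum is attained at $k=\left\lfloor\frac{n}{2}\right\rfloor$ or $k=\left\lceil\frac{n}{2}\right\rceil$. The maximal value is
\begin{equation*}
\left\lfloor\frac{n}{2}\right\rfloor\left\lceil\frac{n}{2}\right\rceil,
\end{equation*}
because $n-\left\lfloor\frac{n}{2}\right\rfloor=\left\lceil\frac{n}{2}\right\rceil$. This is the bound on $K_a$ stated before the lemma.

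An alternative route, avoiding the square, is to use the difference
\begin{equation*}
f(k+1)-f(k)=n-2k-1 .
\end{equation*}
This difference is positive for $k<\frac{n-1}{2}$, zero at $k=\frac{n-1}{2}$ (possible only when $n$ is odd), and negative otherwise. Hence $f$ is unimodal with its peak at the stated values, and either argument completes the proof.
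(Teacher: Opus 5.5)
Your proof is correct and follows essentially the same route as the paper: it also reduces the quantity to the concave quadratic $f(k)=nk-k^2$ and locates its vertex at $k=n/2$, there by relaxing $k$ to a real number and solving $df/dk=0$. Your completing-the-square parity argument (or, alternatively, the difference $f(k+1)-f(k)=n-2k-1$) makes explicit two things the paper leaves implicit: the passage from the real maximizer to the integer maximizers, and the fact that both $\left\lfloor\frac{n}{2}\right\rfloor$ and $\left\lceil\frac{n}{2}\right\rceil$ attain the maximum.
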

\begin{proof}
  Since $C_r^n=\frac{n!}{r!\left(n-r\right)!}$, then $C_1^{n-k}C_1^k=-k^2+nk$. Denote $f\left(k\right)=-k^2+nk$. Since $f\left(k\right)$ is a concave function, relaxing $k$ as a real number and solving $\frac{df}{dk}=0$, we have $k=\frac{n}{2}$.
\end{proof}

Denote link $b$ as the link that reconnects the two disjoint subnetworks after removing link $a$ in a spanning tree. Then, number of all possible swapping link pairs $\left(a,b\right)$ in spanning tree ${_n}T$ is:
\begin{equation}
l_{{_n}T}=\sum_{a\in E\left({_n}T\right)}{m\left({_n^1}T_{-a} \right)\times m\left({_n^2}T_{-a} \right)}\label{eq:number_of_swapping_link_pairs}
\end{equation}

where $E\left({_n}T\right)$ is the set of links in ${_n}T$.

\subsection{Calculation of the objective function after implementing the swapping procedure}

The objective function, i.e., Formulation~\ref{eq:tnd_sta_obj}, can be rewritten as follows:
\begin{equation}\label{form:objective_function}
  Z\left(a,b|{_n}T\right)=\boldsymbol{c}\left({_n}T_{-a}^{+b}\right)^T\boldsymbol{d}.
\end{equation}

This formulation requires the shortest path calculation to obtain all distances of node pairs in the spanning tree ${{_n}T}_{-a}^{+b}$ that first removes link $a$ then inserts link $b$. 
\figref{fig:removing_link_and_reconnect_candidates} shows the removal of link $a$ and all candidates for link $b$ that can reconnect the two disjoint subnetworks. Indeed, we do not need to calculate the shortest paths for all node pairs. The idea is simple, as the shortest path from one subnetwork to the other must traverse the new link, i.e., link $b$. This implies that the shortest path calculation must have been carried out only once within each subnetwork. To show this, we rewrite our objective function as follows:

\begin{figure}[htbp]
  \centering
  \begin{minipage}[b]{0.45\linewidth}
    \centering
    \includegraphics[keepaspectratio, scale=0.4]{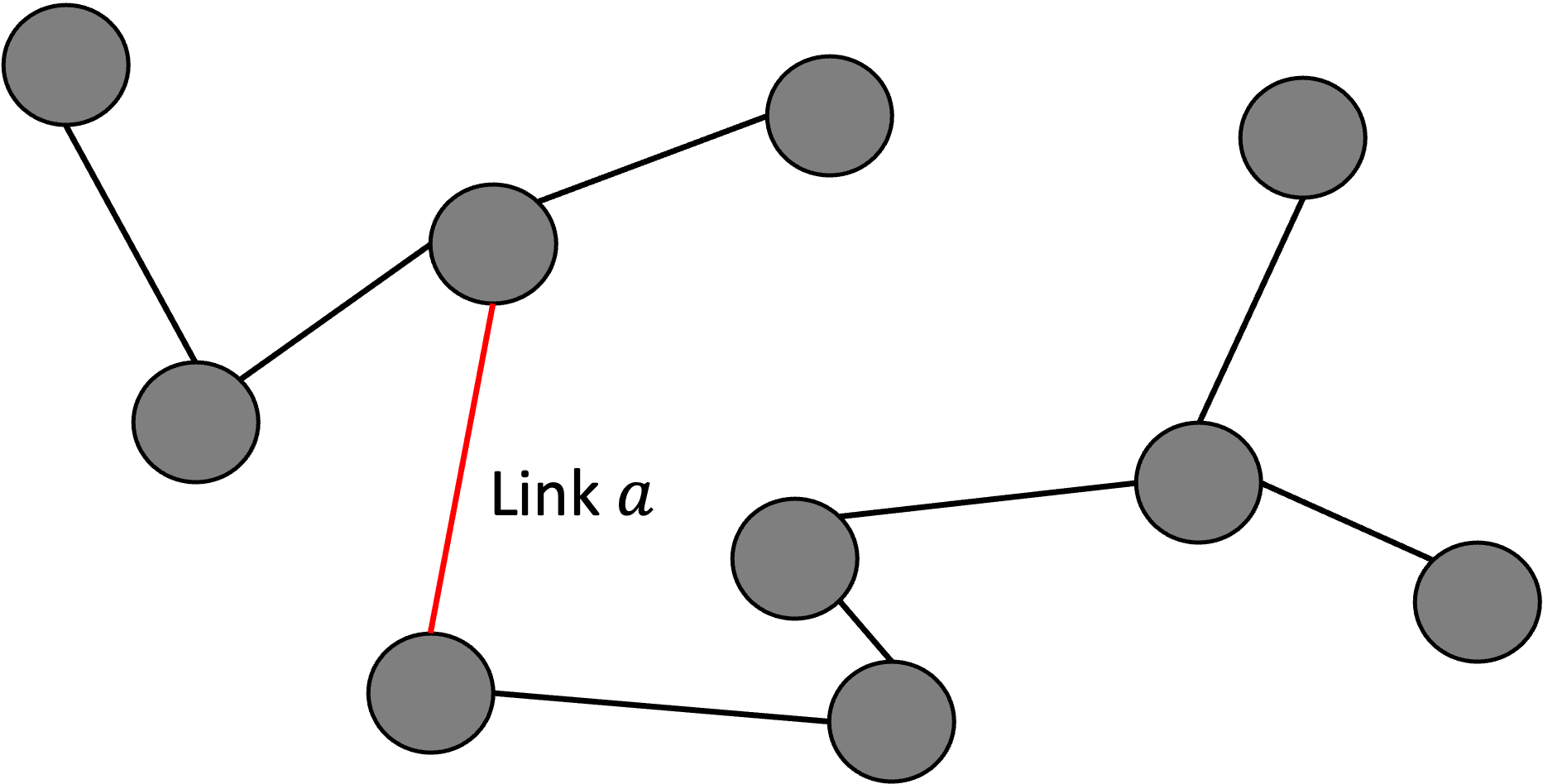}%
    \caption*{(a)	Removing a link from a spanning tree}%
  \end{minipage}%
    \hfill
  \begin{minipage}[b]{0.45\linewidth}
    \centering
    \includegraphics[keepaspectratio, scale=0.4]{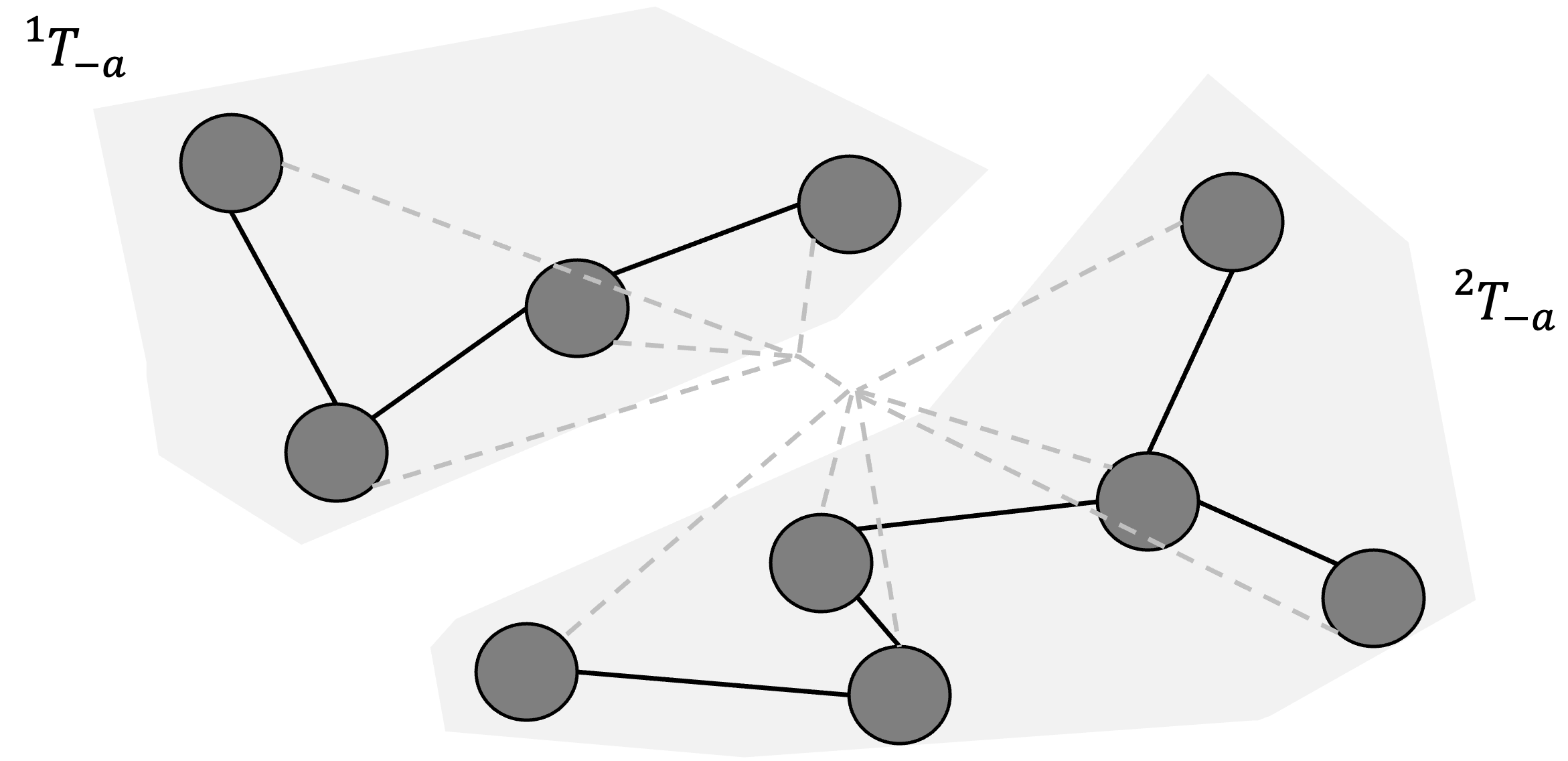}%
    \caption*{(b)	All candidate links that can reconnect two disjoint subnetworks}%
  \end{minipage}
  \caption{Example of removing a link in a spanning tree and listing all candidate links that can reconnect the two disjoint subnetworks.}\label{fig:removing_link_and_reconnect_candidates}%
\end{figure}

\begin{align}\label{eq:objective_function_rewritten}
Z\left(a,b\mid{_n}T\right) =\sum_{i,j\in N\left({_n^1}T_{-a}\right),i\neq j}{c_{\left(ij\right)}\left(d_{\left(ij\right)}+d_{\left(ji\right)}\right)} +\sum_{i,j\in N\left({_n^2}T_{-a}\right),i\neq j}{c_{\left(ij\right)}\left(d_{\left(ij\right)}+d_{\left(ji\right)}\right)}
\\
+\sum_{i\in N\left({_n^1}T_{-a}\right),j\in N\left({_n^2}T_{-a}\right)}{\left[\left(c_{\left(ib\left(1\right)\right)}+c_{b\left(2\right)j}\right)\left(d_{\left(ij\right)}+d_{\left(ji\right)}\right) +t_{\left(b(1)b(2)\right)}\left(d_{(ij)}+d_{(ji)}\right)\right]}, \notag
\end{align}

where $N\left(T\right)$ denotes the all the nodes in tree $T$. (Note: For better legibility, we use $c_{\left(ij\right)}$ and $d_{\left(ij\right)}$, i.e., their matrix notation, instead of $c_w$ and $d_w$, i.e., their vector notation in the TND-STA model.) Without loss of generality, we assume $b\left(1\right)\in N\left({_n^1}T_{-a} \right)$ and $b\left(2\right)\in N\left({_n^2}T_{-a} \right)$. The first and second term represent the shortest path lengths between node pairs within Subnetworks 1 and 2, respectively. For a node pair $(ij)$ with $i$ in Subnetwork 1 and $j$ in Subnetwork 2, the length of the shortest path is the sum of the length of the shortest path from node $i$ in Subnetwork 1 to $b\left(1\right)$, length of link $b$, and the length of the shortest path from node $j$ in Subnetwork 2 to $b\left(2\right)$, which is the third term in the objective function. Although it is assumed here that $c_{\left(ij\right)}=c_{\left(ji\right)}$, this assumption can be easily relaxed if it is not true.

\subsection{Updating tabu list}

In the swap operation, there might be a case that we would repeatedly swap a link multiple times.
To avoid this, tabuing approach is applied. Tabu is defined by a swapping link pair: a removal link and an insertion link. The length of the tabu list can be set as any positive integer. 
The tabu list is sorted in chronological sequence. If the tabu list is full, the oldest swapping link pair is removed from the tabu list when adding a new pair. 
This procedure is expected to lead to an optimal solution, even when the shape of the tree changes during the search process or when swapping is recorded in the tabu list based on previous searches, as governed by this rule. Based on this consideration, the swap is executed even if the swap is in the tabu list when the swap can update the optimal solution. 
Tabu search has been applied to the traveling salesman problem in \citet{Tsubakitani1998}. 
Their results show that the performance of the computation time deteriorates significantly when the size of the tabu list is larger than $N/2$. 
Therefore, for simplicity, we use $N/4$ in the proposed method.

\subsection{Pseudocode of Link Swapping with Tabu Search}
The pseudocode of the algorithm of TND-STA is shown in \figref{fig:pseudo_code}

\begin{figure}[!h]
\begin{algorithm}[H]
  \caption{Link Swapping with Tabu Search}
\begin{algorithmic}[1]
  \footnotesize{
    \Require{$\boldsymbol{t}$ -- Vector of link travel distances, $\boldsymbol{d}$ -- Vector of OD demand, $\boldsymbol{\phi}$ -- maximum number of iterations, $\boldsymbol{\psi}$ -- number of candidate links for deletion in the original spanning tree}
    \Ensure{$T^{*}$ -- Optimal spanning tree}}
    \State{$n$ := $0$} 
    \State{$_nT$ :=  \Call{Kruskal}{$\boldsymbol{t}$}}
    \State{$T^{*}$ := $_nT$}
    \State{$\boldsymbol{c}$:= \Call{Dijkstra}{$T^{*}$}}
    \State{$Z$:= $Z(_nT)$}
    \State{$\mathcal{L}:=\emptyset$}
    \While{$n<\phi$}
    \State{Randomly select $\psi$ links in $_nT$ and store them in $\mathcal{A}$}
    \State{$\boldsymbol{\gamma} \gets \emptyset$}
      \ForAll{$a \in \mathcal{A}$}
        \State{$\{{_n^1}T_{-a},{_n^1}T_{-a}\}\gets _nT-a$}
        \State{Identify all candidate links that can reconnect ${_n^1}T_{-a}$ and ${_n^2}T_{-a}$ and store them in $\mathcal{B}_a$}
        \ForAll{$b \in \mathcal{B_a}$}
          \State{$_nT_{-a}^{+b}\gets \{{_n^1}T_{-a},{_n^1}T_{-a}\}\cup b$}
          \State{$\gamma_{(ab)} \gets Z(_nT_{-a}^{+b})$}
          \State{$\boldsymbol{\gamma}\gets \boldsymbol{\gamma}\cup \gamma_{(ab)}$}
        \EndFor
      \EndFor
      \State{$(a',b')=\argmin_{a\in \mathcal{A},b\in \mathcal{B}_a}{\left\{\gamma_{(ab)}\mid \gamma_{(ab)}\in \boldsymbol{\gamma}\right\}}$}
      \State{$\gamma_{(a'b')}=Z(_nT_{-a'}^{+b'})$}
      \If{$\gamma_{(a'b')}<Z^{*}$}
        \State{$Z^{*}\gets \gamma_{(a'b')}$}
        \State{$T^{*}\gets _nT_{-a'}^{+b'}$}
        \State{$_{n+1}T \gets _nT_{-a'}^{+b'}$}
      \Else 
        \ForAll{$\gamma \in \boldsymbol{\gamma}$}
          \If{$(a',b') \notin \mathcal{L}$}
            \State{$_{n+1}T \gets _nT_{-a'}^{+b'}$}
            \State{\textbf{exitfor}}
          \Else
            \State{$\boldsymbol{\gamma} \gets \boldsymbol{\gamma}-\gamma_{(a'b')}$}
          \EndIf
        \EndFor
        \State{$\mathcal{L}\gets \mathcal{L}\cup \{(a',b')\}$ (Note: If $\mathcal{L}$ is full, remove the oldest link pair, then update $\mathcal{L}$ by including $(a',b')$.)}
      \EndIf
      \State{$n\gets n+1$}
    \EndWhile
\end{algorithmic}
\end{algorithm}
\caption{The pseudo code of the proposed method}
\label{fig:pseudo_code}
\end{figure}

The full description of Link Swapping with Tabu Search is as follows. 
Lines 1 -- 6 declares the initial state of the proposed algorithm. 
The proposed algorithm adopts Krulskal's algorithm to output a minimum spanning tree and set the tree as the global optimum spanning tree \citep{Oncan2008, Rothlauf2009}.
Then, it adopts an efficient shortest path algorithm, e.g., Dijkstra's algorithm \citep{Knuth1977, Wang2020}, to calculate the shortest path distance between any pair of nodes in the spanning tree. 
After that, the proposed algorithm evaluates the quality of the current spanning tree by using Formula (1) in the TND-STA model to obtain the global optimum objective value. In Lines 8 -- 20, the proposed algorithm randomly chooses $\psi$ exiting links in the current spanning tree and store them in set $\mathcal{A}$. 
For each existing link $a$ in $\mathcal{A}$, the proposed algorithm evaluates each candidate link b that can reconnect the two disjoint networks using Formula (12). 
After evaluating each chosen candidate swapping link pair $(a,b)$, the proposed algorithm searches for the current optimum spanning tree, i.e., Lines 19 -- 20. Then, the proposed algorithm proceeds to check whether the current optimal solution improves the global optimum solution. 
In Lines 21 -- 24, if the objective value of the current optimum spanning tree is smaller than the global optimum objective value, the proposed algorithm updates the global optimum objective value and the global optimum spanning tree using the current optimum spanning tree. 
Also, the proposed algorithm uses the current optimum spanning tree as the starting point for its next iteration and updates the tabu list by including the associated swapping link pair $(a',b')$. 
However, if there is no improvement at the current iteration, the proposed algorithm proceeds to Lines 26 -- 34 examines the tabu list using the strategy mentioned in Section 2.4. 
The proposed algorithm repeats procedures in Lines 7 -37 until the While-loop counter $n$ exceeds the maximum number of iterations $\phi$.

\section{Simulation using Canberra bus network data}
The efficacy of our proposed Link Swapping with Tabu Search is verified using the case study of Canberra bus network.

\subsection{Canberra bus network data}
Canberra, the Australian capital city, has a population of approximately 460,900 \citep{ABS2022} which is spread across an area of 807.6 km2. Canberra has somewhat a unique structure -- a Y-shaped poly-centric city is characterized by small town centers and residential districts based on the garden city concept. The main public transportation system in Canberra is the Australian Capital Territory (ACT) Internal Omnibus Network (ACTION). It provides bus transfer and transit services to Canberra suburbs including a regional community minibus service. There is a regional train service but not used for intra-city transit. The ACTION offered four rapid bus routes, a free city loop and 89 routes on weekdays, and two rapid bus routes and 47 routes during the weekend (as of November 2017) . Canberra is a sparse city with a low population density, and the districts are clearly divided because the city growth is strictly controlled by the pre-defined city plan. We therefore focus on the connection of each suburb in the bus network rather than a route with detailed road sections. We study the bus service improvement by comparing the optimal solution obtained from the proposed Link Swapping with Tabu Search and the operated bus route. The passenger origin-destination (OD) demand is obtained from card usage data.

The smart card data used for this research was recorded in June 2016 by the Australian Capital Territory Government. The smart card data recorded 1207494 trips on public transport, collected by an automatic fare collection system which is used on every ACTION BUS vehicle. Passengers tap on when they board and tap off when they alight a bus. Each record contains the attributes: bus route number, origin date, origin tap on time, origin stop name, origin stop coordinate, destination tap off time, destination stop name, destination stop coordinate, and type of smart card.

Bus passengers normally board at a bus stop on a street and alight at another bus stop that is located at the opposite side of the same street. This feature leads to a highly asymmetric OD matrix as passengers board and alight at different bus stops which are just located on the opposite side of the same street. Since our objective is to analyze the connection between the suburbs of the bus network, we do not need the detailed station information within a suburb. Thus, number of nodes in the bus network is reduced to 111 large bus stops, as there are 111 suburbs in Canberra. There are two additional advantages in this aggregation. One is that the OD matrix becomes almost symmetric when considering 111 bus stops. The other is that this aggregation can reduce the computation complexity so that we can compare the performance of the Link Swapping heuristic algorithm (Heuristic 1) and Link Deletion heuristic algorithm (Heuristic 2) developed by \citet{Bell2020} with our proposed solution algorithm: Link Swapping with Tabu Search.

In order to demonstrate the superiority of the proposed method, we compare the proposed heuristic algorithm and the \citet{Bell2020} heuristic algorithms in terms of computation time and objective value. In Heuristic 1, the number of iterations is 6105 (= 111 x 110 / 2) because the procedure is iterated until links between all node pairs are tabooed. In Heuristic 2, the spanning tree is constructed by link deletion one by one from the complete graph of 111 bus stops until it becomes a tree, therefore the number of iterations is 5995 (= 6150 -- 110). In the tabu search, the number of iterations is a hyperparameter, which is set to 3000 such that the update status of the solution becomes sufficiently stable. The number of iterations of the extended Heuristic1 is also set as 3000, same as Tabu search. Other hyperparameters in the tabu search are set by trials. Number of random removal links $\psi$ is set as 7 and the size of tabu list |L| is set as 80. Since the proposed tabu search algorithm adopts a random strategy in Line 7, the computation time and the objective value are obtained by averaging 100 simulations using the algorithm. The numerical experiments are conducted in the environment -- CPU: AMD Ryzen3800x, RAM: DDR4-3200 64GB, OS: Windows 10, Coded on Matlab 2019b.

\tabref{tab:computational_result} summarizes the computation time and the objective value using Link Swapping heuristic algorithm (Heuristic 1), Link Deletion heuristic algorithm (Heuristic 2), and Link Swapping with Tabu Search. The computation time of our proposed tabu search algorithm is dramatically smaller than those using the other two algorithms, which is 0.22\% compared to Heuristic 1 and 0.30\% compared to Heuristic 2. The objective value obtained from the proposed algorithm is also smaller than those obtained from \citet{Bell2020} algorithms. Therefore, the proposed algorithm leads to a better solution in terms of computation time and objective value.

\begin{table}[h]
\caption{Computation time and objective value using Link Swapping heuristic algorithm, Link Deletion heuristic algorithm, and Link Swapping with Tabu Search.}
\label{tab:computational_result}
\centering
\begin{tabular}{l p{8em} p{7em} p{8em} } \hline
   & Heuristic 1 (Link Swapping) & Heuristic 2 (Link Deletion) & Link Swapping with Tabu Search \\ \hline
  Computation time (in seconds) & 32,237.86 &	23,047.85	& 69.93 \\
  Objective value (in $10^3$ km) & 547,418 &	576,220	& 544,633 \\ \hline
\end{tabular}
\end{table}

\figref{fig:convergence_result} shows the relationship between number of iterations and objective value of each algorithm. The proposed tabu search algorithm reaches the minimum objective value after a few iterations, while the Link Swapping heuristic algorithm (Heuristic 1) reaches its minimum objective value after about 1500 iterations. When reaching a spanning tree using the Link Deletion heuristic algorithm (Heuristic 2), its objective value increases abruptly, as the shortest path becomes much longer after removing many links in the network.

\begin{figure}[htp]
  \centering
  \includegraphics[keepaspectratio, scale=0.6]{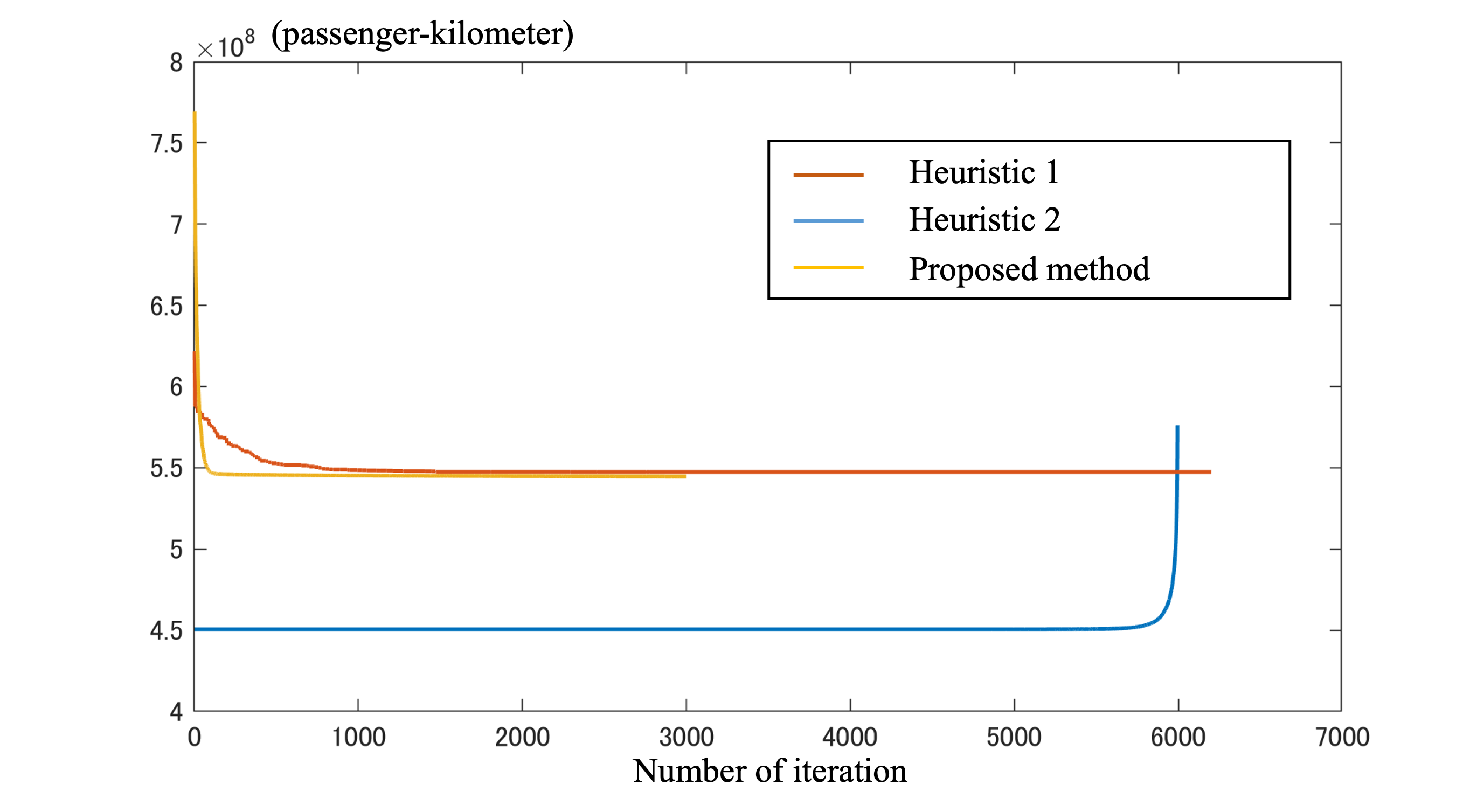}
  \caption{Convergence analysis of Link Swapping heuristic algorithm (Heuristic 1), Link Deletion heuristic algorithm (Heuristic 2), and Link Swapping with Tabu Search (Proposed method).}
  \label{fig:convergence_result}
\end{figure}

\figref{fig:computation_time_result} shows the relationship between computation time and objective value of each algorithm. Note that x-axis in \figref{fig:computation_time_result} is shown in $\log{10}$ scale. The proposed tabu search algorithm converges to a solution quickly. However, \citet{Bell2020} heuristic algorithms require more time to converge to a solution, as computing the inverse of a 111 x 111 matrix is time-consuming.

\begin{figure}[htp]
  \centering
  \includegraphics[keepaspectratio, scale=0.6]{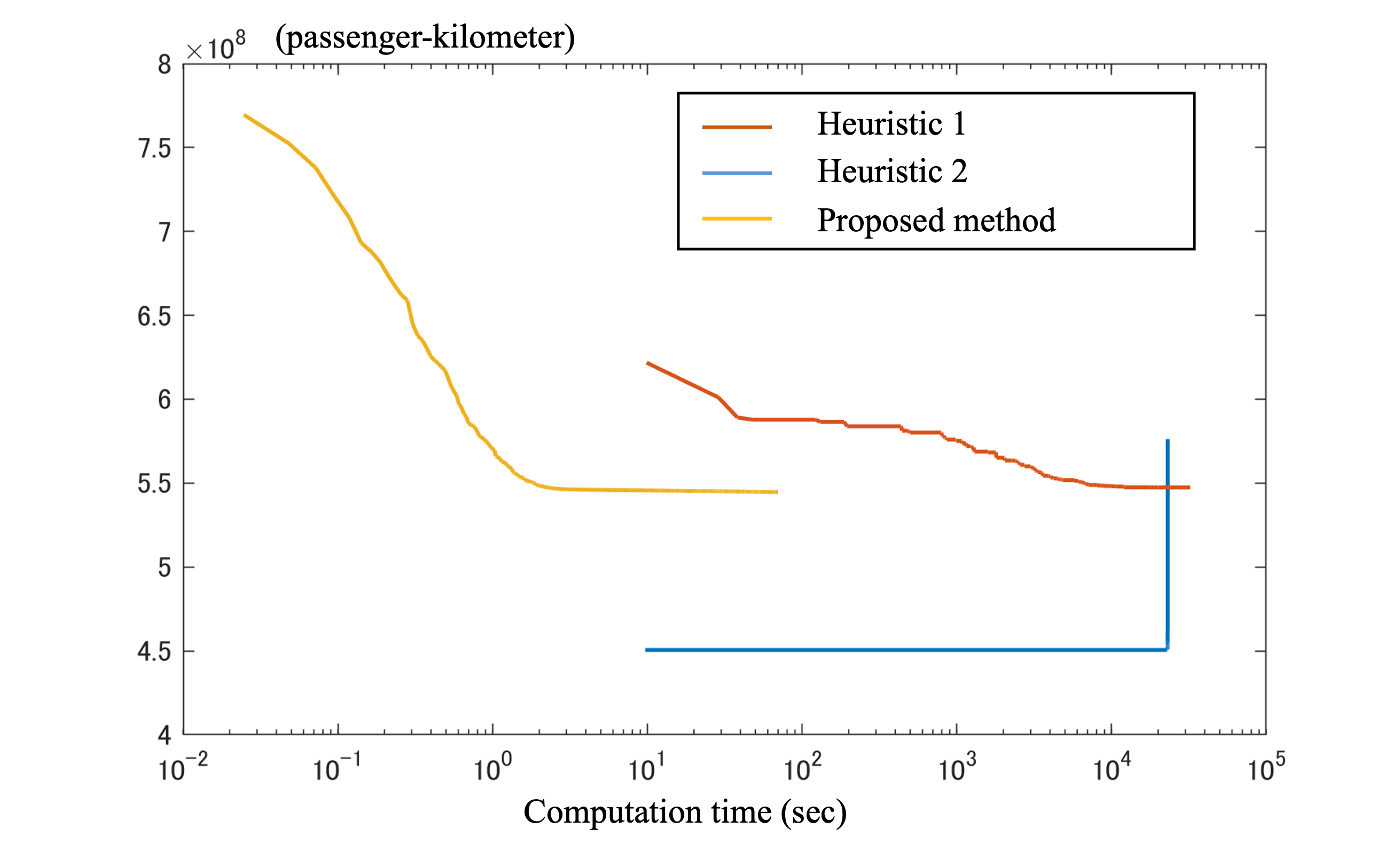}
  \caption{Performance analysis of Link Swapping heuristic algorithm (Heuristic 1), Link Deletion heuristic algorithm (Heuristic 2), and Link Swapping with Tabu Search (Proposed method).}
  \label{fig:computation_time_result}
\end{figure}

Therefore, the proposed tabu search algorithm -- Link Swapping with Tabu Search -- demonstrates its superiority in terms of computation time and solution quality.

\subsection{Comparison of different spanning trees}
This section compares the minimum passenger-kilometer spanning tree (MPKST) obtained using the proposed tabu search algorithm, the minimum distance spanning tree (MST), and the maximum demand spanning tree (MDST) in terms of total passenger-kilometer and topology. MST can be obtained by solving the following optimization problem:

\begin{equation}
  \min_{\boldsymbol{y}} \sum_{i,j \in N, i\neq j} t_{(ij)} y_{(ij)}
  \label{eq:minimum_distance_spanning_tree}
\end{equation}
subject to Formulae \eqref{eq:tnd_sta_num_links}, \eqref{eq:tnd_sta_no_cycles}, and \eqref{eq:tnd_sta_binary_y}.

MDST can be obtained by solving the following optimization problem:
\begin{equation}
  \max_{\boldsymbol{y}} \sum_{i,j \in N, i\neq j} -d_{(ij)} y_{(ij)}
  \label{eq:maximum_demand_spanning_tree}
\end{equation}

subject to Formulae \eqref{eq:tnd_sta_num_links}, \eqref{eq:tnd_sta_no_cycles}, and \eqref{eq:tnd_sta_binary_y}. (Note: To better reveal station pairs, we use $d_{\left(ij\right)}$, i.e., its matrix notation, instead of $d_w$, i.e., its vector notation in the TND-STA model.)

These two problems -- MST and MDST -- can be solved using Kruskal's algorithm \citep{Kruskal1956}. Table 4 summarizes the total passenger-kilometers obtained from the three spanning trees.  Compared with MST, MDST reduces passenger-kilometers by 33\%. 
Therefore, considering OD demand alone in designing a bus network is better than just considering minimum travel distance. 
However, considering both OD demand and cost of travelling is paramount in a bus network, as MPKST abruptly reduces passenger-kilometers by 77\% compared with MST and by 66\% compared with MDST.

\begin{table}[h]
\centering
\caption{Total passenger-kilometers obtained from the three spanning trees: MST, MDST and MPKST.}
\label{tab:res_spanning_trees}
\begin{tabular}{p{0.1\textwidth} p{0.15\textwidth} p{0.25\textwidth} p{0.25\textwidth}}
\hline
Spanning tree & Total passenger-kilometers & Percentage change with respect to MST & Percentage change with respect to MDST \\
\hline
MST & 2,375,188,779 & - & +49.31\% \\
MDST & 1,590,809,637 & -33.02\% & - \\
MPKST & 544,329,352 & -77.08\% & -65.78\% \\
\hline
\end{tabular}
\end{table}

\figref{fig:three_spanning_trees} illustrates the topologies of the three spanning trees. The MPKST (\figref{MPKST}) is composed of a trunk line that connects high demand nodes and several branch lines. The color of a node in the figure indicates the degree of the demand. MPKST indicates that the red nodes with higher degrees are shown at the major traffic points such as Belconnen, City and Woden Valley. Consequently, the location of high-degree nodes identified by MPKST is actually an important traffic node, which is consistent with the purpose of this study to look for traffic nodes. And the trunk line formed with these three major traffic points is closely in line with the current rapid service bus route in \figref{fig:canberra_rapid_network}. This finding shows that our proposed methodology demonstrates the practicability in designing a bus network.

The maximum demand spanning tree (MDST) captures large demand nodes directly (\figref{MDST}). Consequently, it also reduces the passenger-kilometer, as residents in Belconnen can enjoy the direct service to City. In addition to the Belconnen-City link, the MDST also identifies another two large demand links -- one is between City and Woden Valley, and the other is between City and Tuggeranong. Number of residents in Woden Valley is relatively small (16000 in 2016), but the proportion of bus usage for commute is slightly higher than Belconnen (also at about eight percent). Tuggeranong has the second largest resident group in the area (around 42000) but smaller proportion of bus usage for commute (at 5.2 percent). However, the only major district with high bus users that is not being identified by the MDST is Gungahlin, which is captured in the trunk line in MST.

Another issue with MDST shown in \figref{MDST} is the separation of the line to Woden and Tuggeranong. Although both Woden Valley and Tuggeranong show large demand to City, the line to get to Tuggeranong is very close to that to Woden Valley which is captured in MPKST in \figref{MPKST}. Therefore, MPKST inherits the advantages in both the maximum demand (captured by MDST) and the minimum distance spanning trees (captured by MST) to provide the minimum travel distance in the largest demand areas. The proposed method, i.e., the MPKST, can identify the smallest aggregated passenger-kilometer bus line in Canberra. This major bus line that serves as the public transport corridor is identified inthe path Belconnen-City-Woden Valley-Tuggeranong (combining the major links: Belconnen-City and City-Woden Valley inthe trunk line, andthe major link City-Tuggeranong in a branch line). Such a bus line isthe blue rapid line shown in \figref{fig:canberra_rapid_network}. This line itself resemblesthe western andthe southern parts ofthe public transport corridor inthe Y-shaped poly-centric city plan drawn forthe National Capital Development Commission (NCDC) inthe late 1960s and the development of Griffin original plan \citep{Wensing2013}.

Although MPKST also identifies the northern corridor to Gungahlin which is captured in MDST, it has not been classified as the major bus line. This may also explain why although gaining a considerable success, this rapid line is not as successful as the blue rapid line (the two rapids line can be seen in the 2016 network map). One of the possible explanations is that the Flemington road used as the main route of the northern red rapid lines was very congested. This reduced the efficiency of the bus system, as buses have to compete with cars for access to the road and even some commuters choose to deter from this road and take a further ring road outside Canberra to the Woden Valley area. This may have changed with the introduction of the light rail along the northern corridor from Gungahlin to City and increase the usage of public transport from Gungahlin in the northern area in Canberra.

\begin{figure}[p]
\begin{tabular}{cc}
      \begin{minipage}[t]{0.45\columnwidth}
        \centering
        \includegraphics[keepaspectratio, scale=0.4]{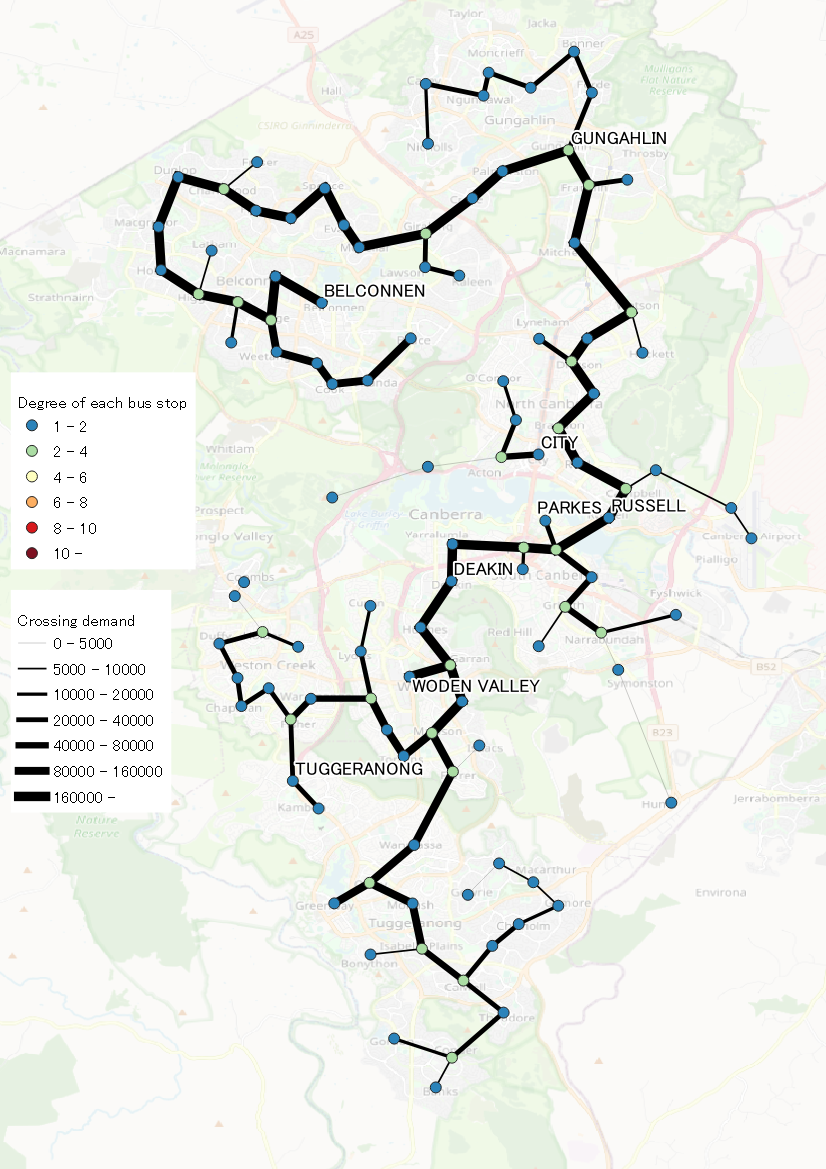}
        \subcaption{Minimum distance spanning tree (MST)}
        \label{MST}
      \end{minipage} &
      \begin{minipage}[t]{0.45\columnwidth}
        \centering
        \includegraphics[keepaspectratio, scale=0.4]{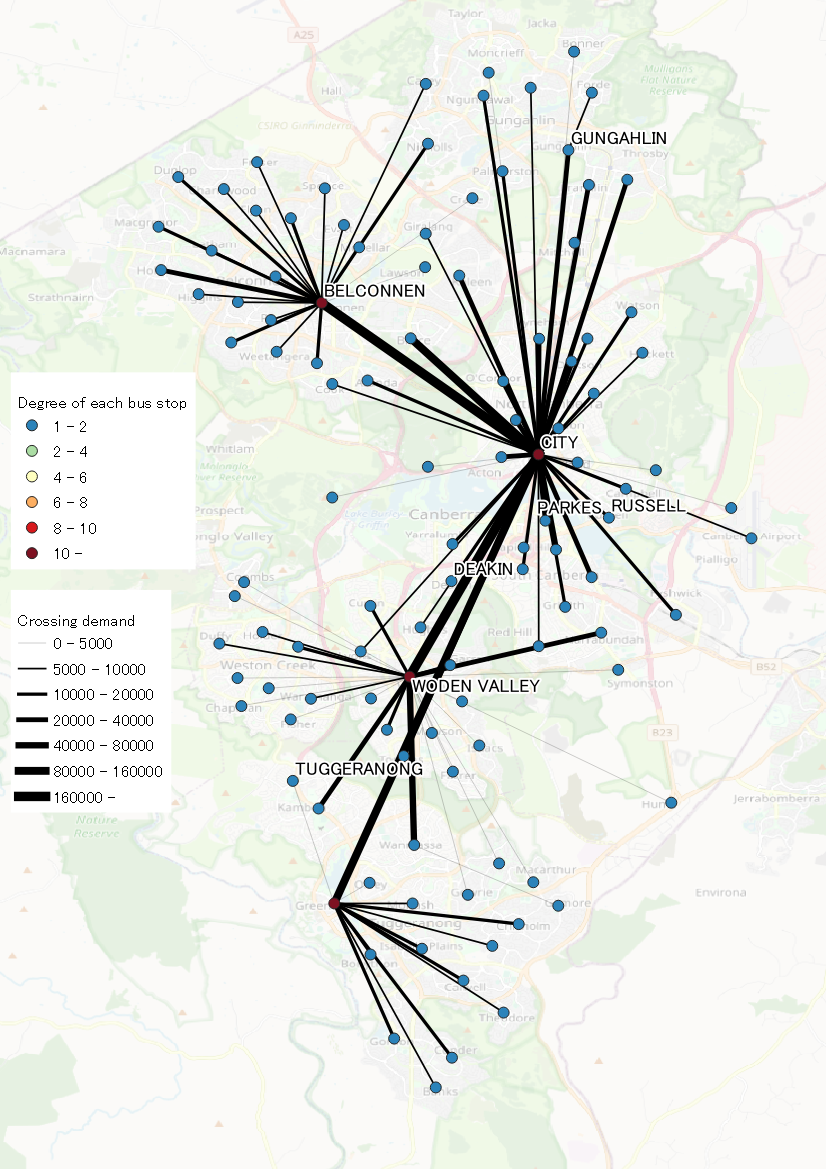}
        \subcaption{Maximum demand spanning tree (MDST)}
        \label{MDST}
      \end{minipage} \\
      \multicolumn{2}{c}{
      \begin{minipage}[t]{0.9\columnwidth}
        \centering
        \includegraphics[keepaspectratio, scale=0.4]{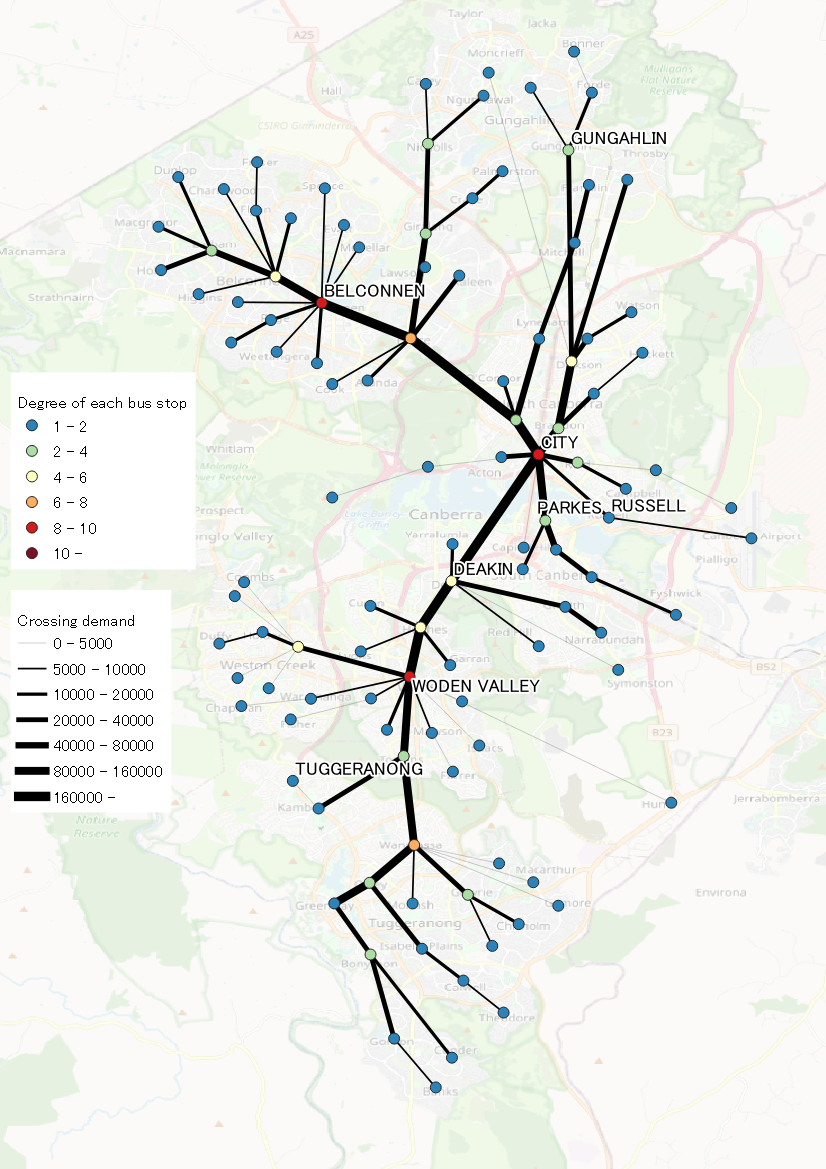}
        \subcaption{Minimum passenger-kilometer spanning tree (MPKST)}
        \label{MPKST}
      \end{minipage}}
    \end{tabular}
     \caption{Topologies of the three spanning trees: MST, MDST and MPKST.}\label{fig:three_spanning_trees}
\end{figure}

\begin{figure}[p]
  \centering
  \includegraphics[keepaspectratio, scale=0.8]{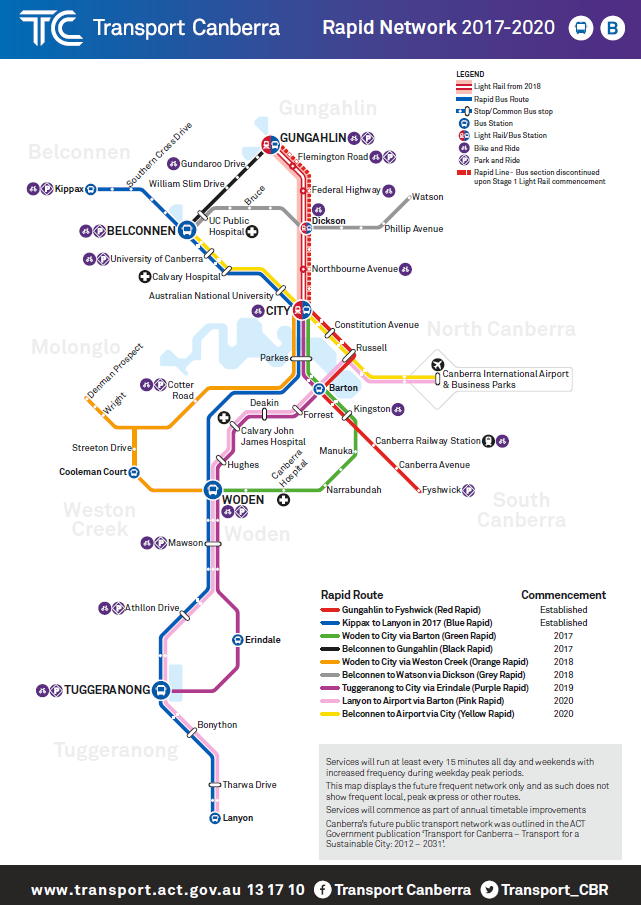}
  \caption{Map of Canberra rapid public transport network.}
  \label{fig:canberra_rapid_network}
\end{figure}

Moreover, MPKST captures other rapid lines and hubs in the rapid network shown in \figref{fig:canberra_rapid_network} and achieves the least passenger-kilometers among the three spanning trees. Also, MPKST confirms the practicality of the proposed methodology. Take a look at \figref{MPKST}, MPKST connects Deakin and Narrabundah, while, on the bus route map in \figref{fig:canberra_rapid_network}, Narrabundah is connected to Woden Valley. While the Green Rapid in \figref{fig:canberra_rapid_network} forms a closed path in combination with other lines, the MPKST is not allowed to make a closed path. Therefore, instead of Woden Valley and City, where Green Rapid connects with other lines, MPKST connects Deakin and Narrabundah, which are located in the middle. Similarly, the western line captures a large demand between Woden Valley and the western area that constitutes a closed path via the northern area. However, this cannot be captured in MPKST, as it is a tree that excludes closed paths.

Although the passenger-kilometer is minimized in MPKST, there is a concern that the path travel distance for passengers between certain origin-destination (OD) pairs may be large. If many passengers travel a larger distance on the MPKST compared to the direct distance, the level of service may be lower regardless of the frequency setting. Or if a particular OD pair travels an extremely long distance on the MPKST, it may be a concern in terms of fairness. We show how the ratio $\frac{c_{\left(ij\right)}}{t_{\left(ij\right)}}$  of the path travel distance $c_{\left(ij\right)}$ to the link distance $t_{\left(ij\right)}$ between OD pair on MPKST is distributed. (Note: For better legibility, we use $c_{\left(ij\right)}$, i.e., its matrix notation, instead of $c_w$, i.e., their vector notation in the TND-STA model.) \figref{fig:relationship_between_cumulative_frequency_and_ratio} shows the cumulative frequency distribution of the quantity of demand and OD pairs corresponding to the values of $\frac{c_{\left(ij\right)}}{t_{\left(ij\right)}}$. In MPKST, more than 80\% of the total demand is contained in $\frac{c_{\left(ij\right)}}{t_{\left(ij\right)}}\leq 1.5$, and more than 90\% in $\frac{c_{\left(ij\right)}}{t_{\left(ij\right)}}\leq 2.0$. Compared with the results of MDST and MST, many demands can reach the destination with a path travel distance close to the link distance for small values. Even though MDST is a structure that directly connects between ODs with large demand, MPKST has a larger cumulative frequency for $\frac{c_{\left(ij\right)}}{t_{\left(ij\right)}}\geq 1.3$. In \figref{fig:number_of_od_pairs}, the number of OD pairs shows that MPKST has better results than MST and MDST. \figref{fig:relationship_between_cumulative_frequency_and_ratio} also shows the results of a similar calculation on a network that tracing the actual public transport network in Canberra, shown in \figref{fig:actual_network}.

To ensure that all suburbs are connected, this network adds other bus routes to the rapid service routes in \figref{fig:canberra_rapid_network}. The number of links in \figref{fig:actual_network} is 149, which is 39 more than MPKST, and the distance is 18,355, which is 10\% more than MPKST's 16,732, but as can be seen in \figref{fig:cumulative_frequency_demand}, there is no significant difference in the distribution, especially in demand perspective. This result indicates that if the bus routes are routed on the links used in MPKST, the distance traveled by passengers will not change significantly even if the length of bus lines are reduced by 10\% from the current service. Thus, the service providers can improve their operational efficiency by consolidating the bus routes by considering MPKST links. On the other hand, for the number of OD pairs shown in \figref{fig:number_of_od_pairs}, MPKST is smaller than the existing bus service. As shown in \figref{fig:actual_network}, there are nodes other than hubs which are arranged like leaves in a tree, and the path travel distance inevitably becomes longer for some nodes. Since the major aim of this study is the finding of hub nodes and trunk routes, the response to the fairness issue should be updated in the line setting in TND-STA. In this case, efficient route planning can be expected by considering a line setting such as a feeder bus starting from a hub discovered by the TND-STA.

\begin{figure}[htp]
\begin{tabular}{cc}
      \begin{minipage}[t]{0.45\columnwidth}
        \centering
        \includegraphics[keepaspectratio, scale=0.4]{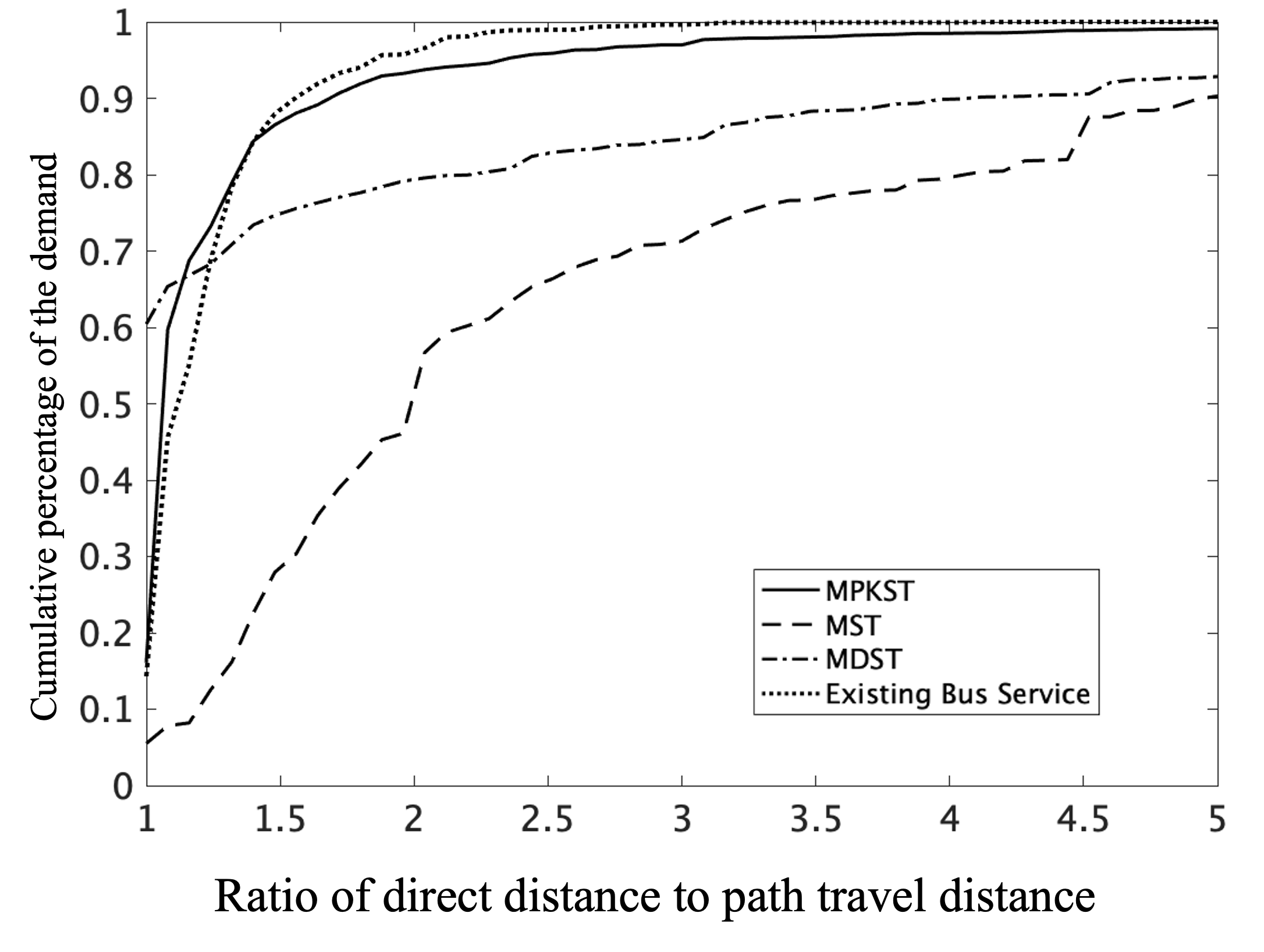}
        \subcaption{The volume of demand}
        \label{fig:cumulative_frequency_demand}
      \end{minipage} &
      \begin{minipage}[t]{0.4\columnwidth}
        \centering
        \includegraphics[keepaspectratio, scale=0.4]{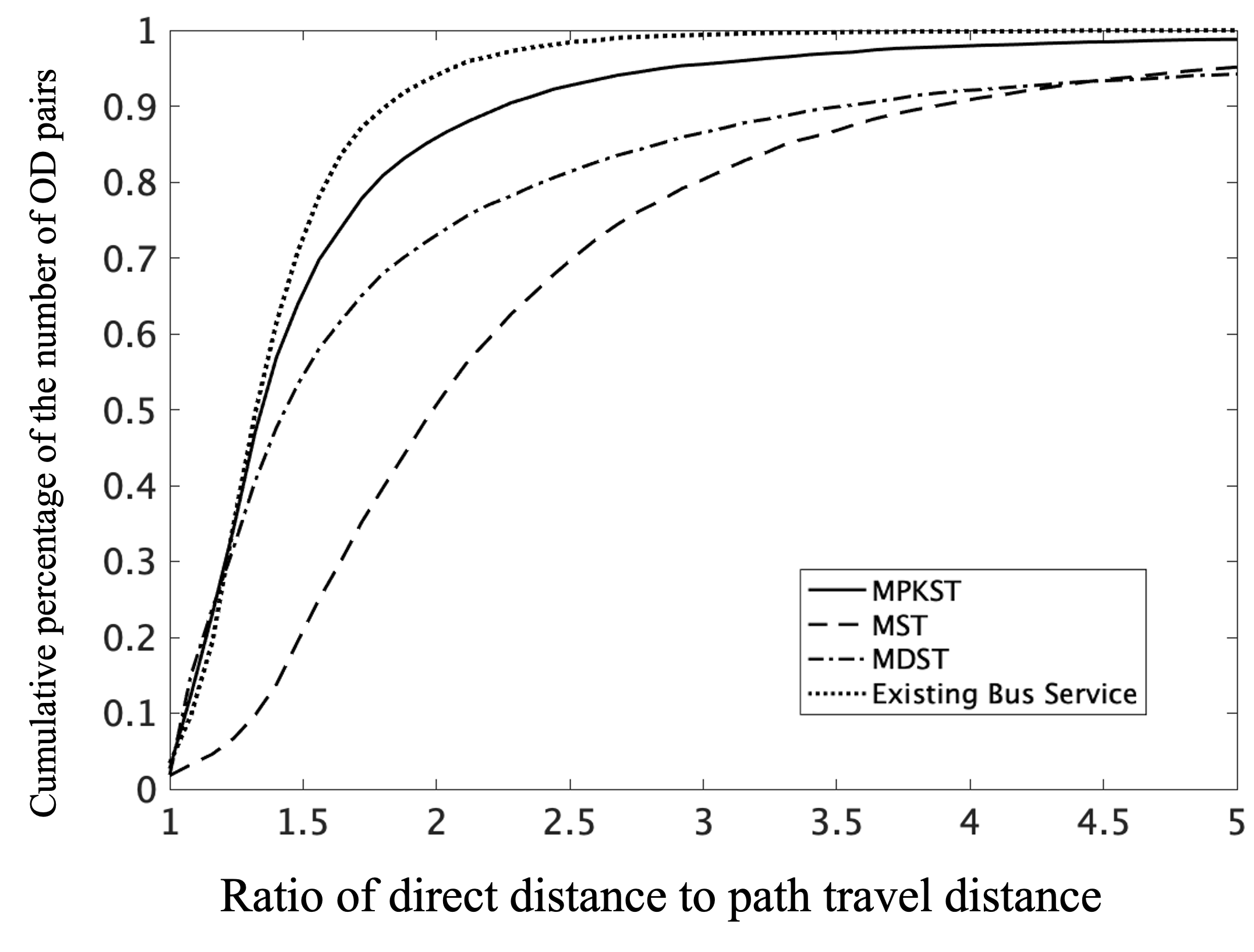}
        \subcaption{The number of OD pairs}
        \label{fig:number_of_od_pairs}
      \end{minipage}
    \end{tabular}
     \caption{Relationship between cumulative frequency distribution and ratio of link distance to path travel distance.}\label{fig:relationship_between_cumulative_frequency_and_ratio}
\end{figure}

\begin{figure}[htp]
  \centering
  \includegraphics[keepaspectratio, scale=0.4]{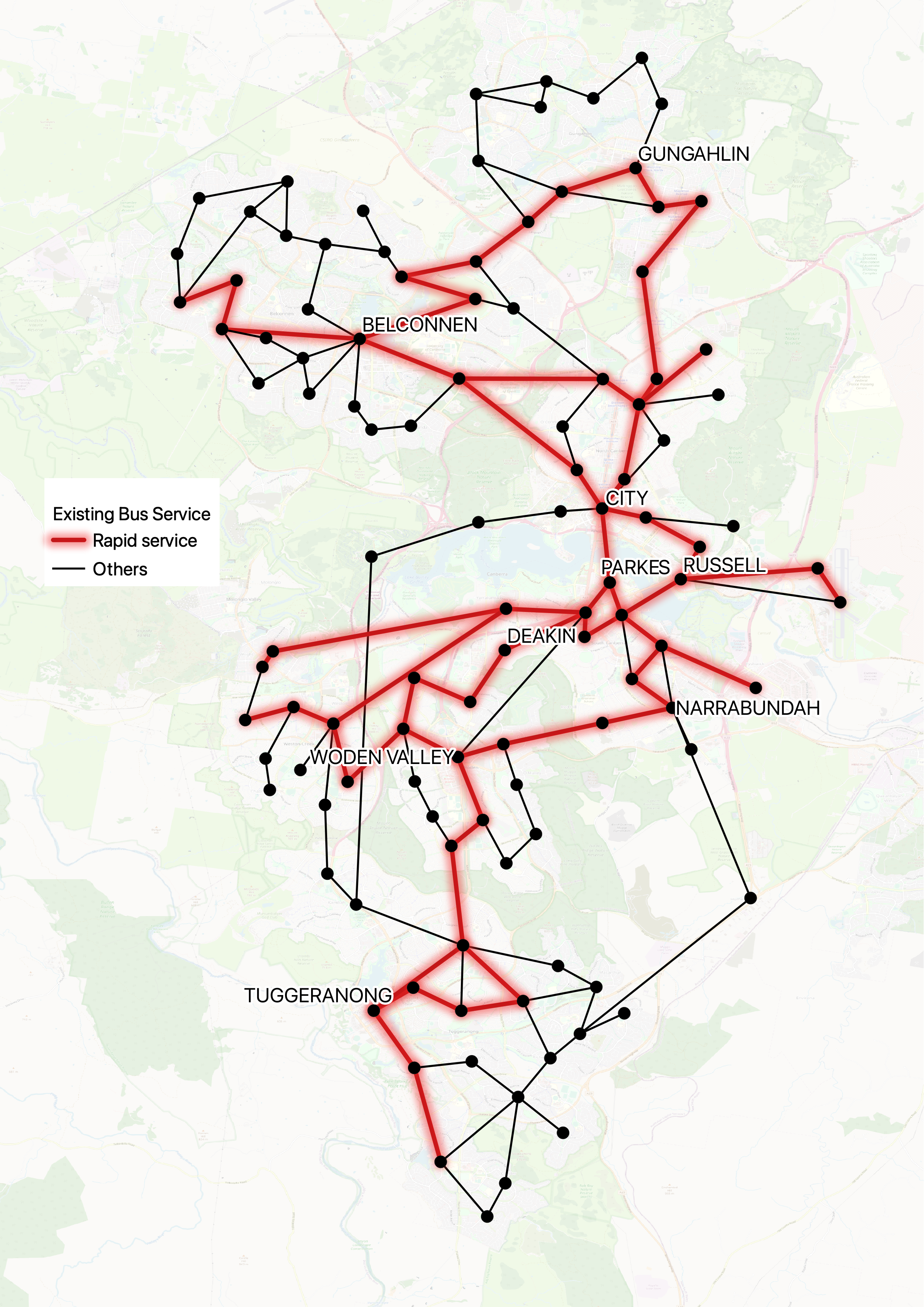}
  \caption{Topology of Canberra public transport network topology.}
  \label{fig:actual_network}
\end{figure}

\section{Further improvements on transit network design}

In a transit network design problem (TNDP), the spanning tree constraint is a very strong constraint on network topology. This constraint does not allow closed paths that can lead to significant detours. The application to the Canberra bus network shows that many OD pairs or demand measured in passenger-kilometers do not deviate significantly from the link distance, but smaller passenger-kilometers are possible if closed paths are allowed. If the policy maker needs to gain a better network topology than MPKST, they might want to relax the number of links constraint and add more links. Let's add a link $y_{\left(ij\right)}$ to MPKST that minimizes passenger-kilometers, and call it graph $T_{+1}^{*}$. Further, add the link $y_{ij}$ that minimizes passenger-kilometers to $T_{+1}^{*}$ and call it $T_{+2}^{*}$. This iterative process is considered to be a greedy algorithm with additional links to minimize passenger-kilometers with MPKST as the initial state. The network after adding $\alpha$ links is represented by the incremental equation as follows:

\begin{equation}
  T_{+\alpha}^{*}=T_{+\alpha-1}^{*}\cup{\argmin_{y_{\left(ij\right)}}{Z\left(T_{+\alpha-1}^{*}\cup y_{\left(ij\right)}\right)}}
    \label{eq:greedy_algorithm_with_additional_links}
\end{equation}

This additional work does not diminish the superiority of the proposed TND-STA and the proposed tabu search, because problems with a constraint on the number of links not equal to $N-1$ are difficult to solve without a fast solution method like our method. The greedy algorithm with MPKST as initial value is easy to solve.

The changes in passenger-kilometers with sequential relaxation of the number of links constraint are shown in \figref{fig:greedy_algorithm_with_additional_links}, where $\mathbf{td}$ is the sum of the product of the link distance and the demand, and is the lower bound of TNDP when the number of links constraint is removed. Adding a small number of links to MPKST greatly reduces passenger-kilometers, while the marginal effects become smaller when more links are added. \figref{fig:greedy_algorithm_cumulative_frequency} shows the results of the same analysis as \figref{fig:relationship_between_cumulative_frequency_and_ratio} on a graph with 10 links added to MPKST by the greedy algorithm of \eqref{eq:greedy_algorithm_with_additional_links}. $T_{+10}^{*}$ shows a significant improvement in cumulative percentage with only 10 additional links compared to MPKST. Particularly, more than 90\% of passengers fall into the range of $\frac{c_{\left(ij\right)}}{t_{\left(ij\right)}}\leq 1.5$, which indicates that $T_{+10}^{*}$ significantly increases the cumulative percentage compared to the existing bus service. The proposed extension approach can also help policy makers in finding trunk routes and hub nodes.

\begin{figure}[htp]
  \centering
  \includegraphics[keepaspectratio, scale=0.5]{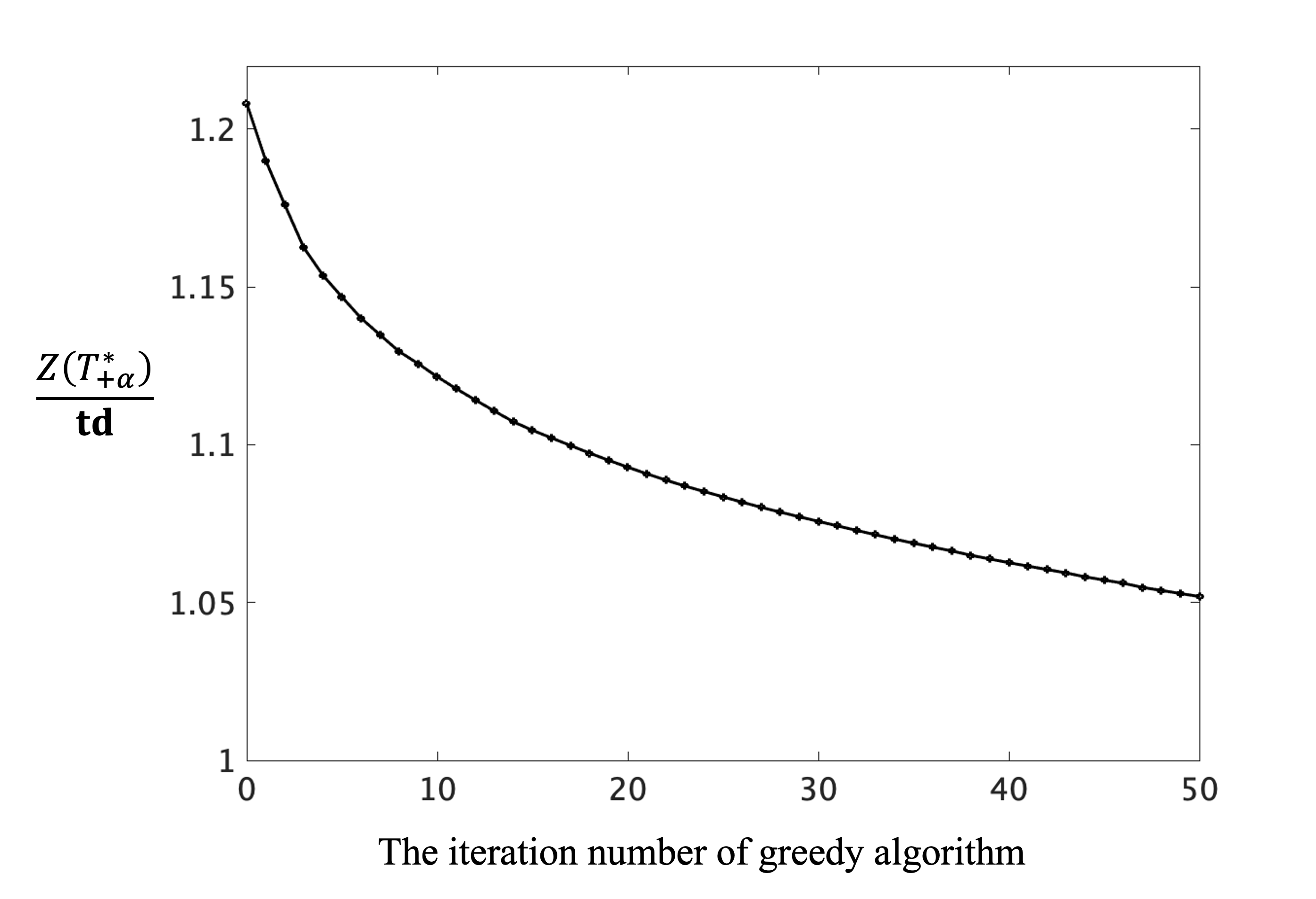}
  \caption{Transition of the ratio of the lower bound to the passenger-kilometers obtained using the greedy algorithm.}
  \label{fig:greedy_algorithm_with_additional_links}
\end{figure}

\begin{figure}[htp]
  \centering
  \includegraphics[keepaspectratio, scale=0.6]{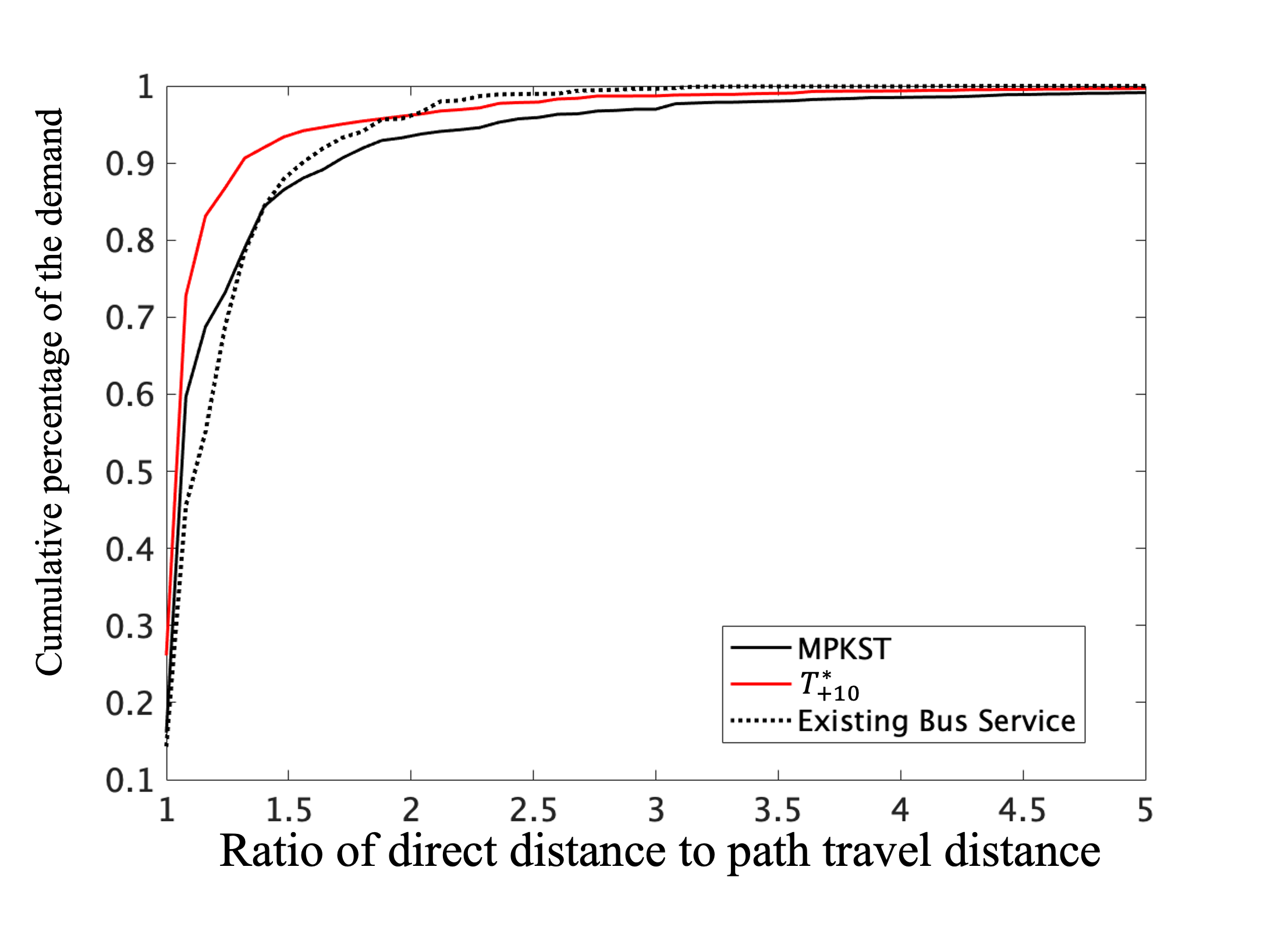}
  \caption{Relationship between cumulative frequency distribution of the demand and ratio of link distance to path travel distance. The red line is the result for the graph with 10 links added by the greedy algorithm from MPKST.}
  \label{fig:greedy_algorithm_cumulative_frequency}
\end{figure}

\section{Conclusion}
This study proposes an optimization model to study a transit network design problem using the concept of spanning tree, which minimizes the total passenger-kilometers in the network. Also, we develop a solution algorithm: Link Swapping with Tabu Search to quickly solve the model. Unlike the state-of-the-art heuristic algorithms in \citet{Bell2020} that carry out many shortest path calculations, this tabu search algorithm only needs to carry out a few shortest path calculations, improving the computational performance. The numerical experiments using the Canberra bus network data demonstrates the superiority of the proposed algorithm in terms of computation time and solution quality, compared with the Link Swapping and Link Deleting heuristic algorithms in \citet{Bell2020}.

The MPKST obtained from the proposed tabu search algorithm shows the major hubs and the trunk routes in the Canberra bus network, which are consistent with the current bus network in Canberra (shown in \figref{fig:canberra_rapid_network}). We confirmed that the proposed method can transport a large number of travel demands in a shorter path travel time with fewer links than the existing bus service. Thus, the proposed methodology is worth for preliminary planning in the bus network (re-)design. We also provided a method to further reduce passenger-kilometers using a greedy algorithm with the tree derived by the TND-STA as the initial state. This method relaxes the restriction on the number of links from the tree derived by TND-STA, but provides a more effective connection structure for preliminal planning.

The primary aim in this paper is to help design the topology of a transit network without consideration of frequency or service design. Having identified hubs and trunk routes, services can be designed (routes, frequencies, vehicle type, etc.) in a second step. Also, the proposed methodology can help reduce number and length of roads used in route design because of the way a spanning tree concentrates flows on a limited number of links. This will also increase service frequencies at stops thereby reducing passenger waiting times. Moreover, the areas with large demand are likely to be more directly connected capturing major rapid lines in the solution. The limitation of the current research is that it only considers demand and topology. Driver assignment, equity of user, etc. are also important in transit network design. Also, verification of the practicality of the proposed methodology in other cities is needed. We will leave these two directions for our next study.

\bibliographystyle{plainnat}
\bibliography{SPT}

\end{document}